\definecolor{pgray}{RGB}{75,92,113}
\definecolor{porange}{RGB}{230,148,20}
\definecolor{pblue}{RGB}{104,140,152}
\newcommand{\colorlist}[1]{%
\ifcase#1 
Blue%
\or
ForestGreen%
\or
Fuchsia%
\or
Turquoise%
\or
BurntOrange%
\or
WildStrawberry%
\or
LimeGreen%
\else
Orchid%
\fi
}
\theoremstyle{plain} 
\newtheorem{theorem}{Theorem}[section]
\newtheorem{proposition}[theorem]{Proposition}
\newtheorem{lemma}[theorem]{Lemma}
\theoremstyle{definition} 
\newtheorem{definition}[theorem]{Definition}
\newtheorem{remark}[theorem]{Remark}
\newtheorem{example}[theorem]{Example}
\newcounter{suggestcount}
\newcounter{commentlabel}
\newlength{\poincarecommentlift}
\DeclareRobustCommand{\COMMENT}[1]{\@bsphack%
	\stepcounter{commentlabel}%
	\vbox to0pt{%
		\setlength{\fboxrule}{0.75pt}%
		\setlength{\fboxsep}{0.75pt}%
		\setlength{\poincarecommentlift}{1ex}%
		\addtolength{\poincarecommentlift}{\fboxrule}%
		\addtolength{\poincarecommentlift}{\fboxsep}%
		\vss\color{red}%
		\rlap{\rlap{\vrule height\poincarecommentlift width\fboxrule}\raise \poincarecommentlift%
		\hbox{\fcolorbox{red}{yellow}{%
\normalfont\footnotesize\ttfamily\bfseries\thecommentlabel}}}}%
	\marginpar{\noindent\raggedright%
	\textbf{\color{red}\thecommentlabel}:\thinspace\footnotesize#1}%
}
\title{  Brill-Noether loci.}
\author{montserrat Teixidor i Bigas}
\address{Mathematics Department, Tufts University, 177 College Avenue, Medford MA 02155, USA}
\email{mteixido@tufts.edu}
\let\oldtocsection=\tocsection
\let\oldtocsubsection=\tocsubsection
\renewcommand{\tocsection}[2]{\hspace{0em}\oldtocsection{#1}{#2}}
\renewcommand{\tocsubsection}[2]{\hspace{1.1em}\oldtocsubsection{#1}{#2}}
\begin{document} 

\maketitle


\setlength{\parindent}{0cm} 
\setlength{\parskip}{\baselineskip} 
\setlength{\abovedisplayskip}{0.7\baselineskip} 
\setlength{\belowdisplayskip}{0.7\baselineskip}

\begin{abstract} Brill-Noether loci ${\mathcal M}^r_{g,d}$  are those subsets of the moduli space ${\mathcal M}_g$ determined by the existence of a linear series of degree $d$ and dimension $r$.
By looking at non-singular curves in a neighborhood of a special chain of elliptic curves,  we provide a new proof of the non-emptiness of the Brill-Noether loci 
when the expected codimension  satisfies  $-g+r+1\le  \rho(g,r,d)\le  0$
  and prove  that for a generic point of a component of this locus, the Petri map is onto.
As an application, we show that Brill-Noether loci of the same codimension are distinct when the codimension is not too large, substantially generalizing the known result in codimensions 1 and 2.
We also provide a new technique for checking that Brill-Noether loci are not included in each other.

Mathematics Subject Classification 14H10, 14H45
\end{abstract}

Let $C$ be a curve of genus $g$. The Brill-Noether locus $G^r_d(C)$ parameterizes linear series of degree $d$ and dimension $r$ on $C$.
When the Brill-Noether number $\rho(g,r,d)\ge 0$, the Brill-Noether locus is non-empty on every curve and irreducible if the number is strictly positive.
On the other hand, when the Brill-Noether number is negative, the locus is empty for $C$ generic but may be non-empty on special curves.
One can then define ${\mathcal M}^r_{g,d}$ as the locus  of curves  that posses a $g^r_d$.
The loci ${\mathcal M}^r_{g,d}$  have played an important role in the study of the geometry of ${\mathcal M}_g$ having allowed for example to show that ${\mathcal M}_g$ is of general type if $g$ is sufficiently large.
Other interesting loci are defined by the rank of natural maps among spaces of sections of line bundles.
One can look for instance at the maximal rank loci determined when the hypersurfaces of the ambient space do not cut a linear series of the expected dimension or the loci of curves where the Petri map fails to be injective.
When $\rho<0$, by dimensionality reasons, it is impossible for the Petri map to be injective but could still be of maximal rank if it were onto.

A first question to ask about the ${\mathcal M}^r_{g,d}$ is whether they are non-empty.
In \cite{Prho<0}, Pflueger showed that ${\mathcal M}^r_{g,d}$ is non-empty and has a component of the expected dimension $-\rho$ when $-\rho\le g-3$ .
Here, we  give a simpler proof for this result for $-\rho\le g-(r+1)$.
We also show that on the generic point of that component of ${\mathcal M}^r_{g,d}$ the Petri map  is onto, fitting with the philosophy of expected maximal rank (see Theorem \ref{ThExComp}).
We will use this result in a forthcoming work to study the loci of curves where the Petri map is not injective.
These loci were expected to be always of codimension one  in the moduli space of curves (see \cite{BS}), a conjecture disproved by Lelli-Chiesa \cite{MarGiesP}.

Another natural question to ask about ${\mathcal M}^r_{g,d}$  is whether the different values of $r, d$ produce distinct loci.
By Serre duality,  ${\mathcal M}^r_{g,d}= {\mathcal M}^{g-d+r-1}_{g,2g-2-d}$, therefore, it suffices to study the case $r+1\le g-d+r$ or equivalently  $d\le g-1$. 
It was proved in \cite{CKK}, \cite{CK}, that when the  codimension is either  1 or 2 and two of the loci ${\mathcal M}^{r_1}_{g,d_1}, {\mathcal M}^{r_2}_{g,d_2}$,
 then the values of the $d_i,r_i$ agree or are Serre dual ($d_2=2g-2-d_1, r_2=g-d_1+r_1$).
Here we extend this result to higher codimension (see Theorem \ref{distBN}).

 Another interesting  question are the possible inclusions of different ${\mathcal M}^r_{g,d}$ in each other.
From a linear series, by adding a fixed point one can always create another linear series of higher degree and the same dimension. 
Similarly, by removing one point, one can create a linear series of both degree and dimension one less. 
Therefore, there are natural  inclusions ${\mathcal M}^r_{g,d}\subseteq {\mathcal M}^r_{g,d+1}$,  ${\mathcal M}^r_{g,d}\subseteq {\mathcal M}^{r-1}_{g,d-1}$. 
In particular, if one is interested on maximal Brill-Noether loci  ${\mathcal M}^r_{g,d}$, it suffices to consider those for which $\rho(g,r,d)< 0$ and $\rho(g,r-1,d-1)\ge 0 , \rho(g,r,d+1)\ge 0$.
 Auel and Haburcak conjectured   that when these conditions are satisfied and $g\ne 7, 8,9$, the loci  ${\mathcal M}^r_{g,d}$ are maximal, that is, not contained on any other  ${\mathcal M}^{r'}_{g,d'}$.
 They prove the conjecture for genus up to 19 and 22, 23 in \cite{AH}.
 In a recent paper \cite{AHK}, the conjecture has been proved in all cases.
 Although we will not pursue this question here, our methods can be used to deal with particular  cases of inclusions of Brill-Noether loci, not just for those expected to be maximal(see Example \ref{exinclloci}).
 Our degeneration technique is very different from the one employed  in \cite{AH}, \cite{AHK} who instead rely on an analysis of line bundles on K3 surfaces.
 
 The author wants to thank Richard Haburcak and Nathan Pflueger for useful comments and suggestions.

\section{Limit Linear Series on a chain of elliptic curves.}

We will use chains of elliptic curves: 
given elliptic curves $E_1, \dots E_g$ with marked points $P_i,Q_i\in E_i$, glue $Q_i$ to $P_{i+1}, i=1,\dots, g-1$ to form a nodal curve $X$ of arithmetic genus $g$.
When the curves and pairs of points on each curve are generic, we will say that we have a generic chain.
In this paper, we will often require that $P_i-Q_i$ be a torsion point of a certain order.

Recall that a  limit linear series of degree $d$ and dimension $r$ on  $X$  consists of the data of a limit linear series of degree $d$ and dimension $r$ on each component  
so that the orders of vanishing of the sections $u^i_0< \dots< u^i_r,\ v^i_0> \dots> v^i_r$  at $P_i, Q_i$ respectively satisfy $u^{i+1}_k+v^i_k\ge d, k=0,\dots, r,\ i=1,\dots, g-1$.
The series is refined if we have equality in these inequalities.
Given a family of curves with central fiber $X$ and a space of linear series of degree $d$ and dimension $r$, 
one obtains as the limit on $X$  (after perhaps adding to  $X$  a few chains of rational curves) a refined limit linear series.

In this section, we present a way to parameterize limit linear series on a chain of elliptic curves that is not necessarily generic.
Representation of Brill-Noether loci on generic chains of elliptic curves by using Young Tableaux was introduced by Edidin (\cite{E} ) in the case where the Brill-Noether number is 0. 
It was much later used, still in the case of $\rho =0$ in the tropical context in in \cite{CDPR}. 
This was generalized to any $\rho $ for $W^r_d$ in \cite{LT} and then Young Tableaux represent whole components of the Brill-Noether locus instead of individual limit linear series.
In the case of special chains, the geometry has been exploited in \cite{JR} in the tropical context.
In \cite{Ramif}, we describe completely the space of limit linear series $G^r_d$ on a chain of generic elliptic curves with ramification at two fixed points.
A similar result is true when the chains are not generic with only a few changes that we describe below.
We do not need the case of added ramification at two points, so we will  not include it, although  ramification at two points does not make the proof any harder.
Recall that in the case of curves of genus 1, Riemann-Roch's Theorem gives a complete answer to the existence of linear series:

\begin{lemma} \label{lemsecLelcurv}Let $C$ be an elliptic curve, $P,Q\in C$, $L$  a line bundle of degree $d$ on $C$, $V\subseteq H^0(C,L)$ an $(r+1)$-dimensional  space of  sections
and $u_0< \dots< u_r,\ v_0> \dots> v_r$  the distinct orders of vanishing of the sections of $V$ at at $P, Q$ respectively.
Then  $u_k+v_k\le d, k=0,\dots, r$.
If equality holds for a particular $k$, then  $L={\mathcal O}(kP+(d-k)Q)$.
Therefore, if $L$ is generic,  $u_k+v_k\le d-1, k=0,\dots, r$ while  if $u_{k_1}+v_{k_1}= d, u_{k_2}+v_{k_2}= d$,   then $P-Q$ is a torsion element of order $u_{k_2}-u_{k_1}$ in the group structure of $C$.
\end{lemma}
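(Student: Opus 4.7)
The plan is to argue entirely on $C$ itself, using a standard dimension count to produce a section that vanishes to high order at both $P$ and $Q$, and then to translate vanishing divisors into isomorphism classes in $\mathrm{Pic}^d(C)$.

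First, for each $k$ I would consider the two filtrations of $V$ by order of vanishing at the marked points,
\[
V^P_k = \{\, s \in V : \mathrm{ord}_P(s) \ge u_k \,\}, \qquad V^Q_k = \{\, s \in V : \mathrm{ord}_Q(s) \ge v_k \,\}.
\]
From the definition of the vanishing sequences $u_0<\dots<u_r$ and $v_0>\dots>v_r$ one has $\dim V^P_k = r+1-k$ and $\dim V^Q_k = k+1$. Since both sit inside the $(r+1)$-dimensional space $V$, their intersection has dimension at least $(r+1-k)+(k+1)-(r+1)=1$, so there is a nonzero section $s \in V^P_k \cap V^Q_k$ with $\mathrm{ord}_P(s)\ge u_k$ and $\mathrm{ord}_Q(s)\ge v_k$. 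Comparing with $\deg(s)=d$ yields $u_k+v_k\le d$.

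Next, for the equality case I would note that $u_k+v_k=d$ forces the divisor of $s$ to be supported entirely at $P$ and $Q$, namely $(s)=u_kP+v_kQ=u_kP+(d-u_k)Q$, so that $L \cong \mathcal{O}_C(u_kP+(d-u_k)Q)$. (I read the ``$kP+(d-k)Q$'' in the statement as shorthand for $u_kP+(d-u_k)Q$.) The claim about generic $L$ is then immediate: as $a$ ranges over $\mathbb{Z}$, the line bundles $\mathcal{O}_C(aP+(d-a)Q)$ form a countable (discrete) subset of $\mathrm{Pic}^d(C)$, so outside this subset all inequalities are strict.

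Finally, if two indices $k_1<k_2$ both attain equality, I would compare the two identifications of $L$: from
\[
\mathcal{O}_C(u_{k_1}P+(d-u_{k_1})Q) \;\cong\; L \;\cong\; \mathcal{O}_C(u_{k_2}P+(d-u_{k_2})Q)
\]
one obtains $\mathcal{O}_C\bigl((u_{k_2}-u_{k_1})(P-Q)\bigr)\cong \mathcal{O}_C$, i.e.\ $(u_{k_2}-u_{k_1})(P-Q)=0$ in the group law on $C$, so $P-Q$ is torsion of order dividing $u_{k_2}-u_{k_1}$. There is essentially no obstacle here: the whole argument is a one-step Grassmannian dimension count plus Abel's theorem on the elliptic curve; the only mild subtlety is bookkeeping between the indices of the vanishing sequence and the exponents of $P,Q$ in the divisor.
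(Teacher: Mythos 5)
Your proposal is correct and follows essentially the same route as the paper: the same dimension count on the two vanishing filtrations inside $V$ to produce a section with $\mathrm{ord}_P(s)\ge u_k$, $\mathrm{ord}_Q(s)\ge v_k$, followed by reading off $L\cong\mathcal{O}_C(u_kP+v_kQ)$ in the equality case and deducing the genericity and torsion statements from Abel's theorem. Your reading of ``$kP+(d-k)Q$'' as $u_kP+(d-u_k)Q$ matches what the paper's own proof actually establishes.
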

\begin{proof} From the definition of $v_k$, the subspace of $V$ of sections with order of vanishing at $Q$ at least $v_k$ has dimension at least $k+1$.
Similarly, the space of sections vanishing with order at least $u_k$ at $P$ has dimension at least $r-k+1$.
Two subspaces of $V$ of dimensions at least $k+1$ and $r-k+1$ necessarily intersect.
Therefore, there is a section in $V$ that vanishes with order at least $u_k$ at $P$ and at least $v_k$ at $Q$.
As the degree of $L$ is $d$, $u_k+v_k\le d$ and equality implies that $L={\mathcal O}(u_kP+v_kQ)$.
\end{proof}

\begin{lemma} \label{lemsecLelcurv2} Let $C$ be an elliptic curve, $P,Q\in C$ such that $P-Q$ is a torsion element of order $l$. 
Let  $0\le u_0< \dots< u_r\le d,\ d\ge v_0> \dots> v_r\ge 0$  be distinct integers such that 
\[ u_{k_1}+v_{k_1}=u_{k_2}+v_{k_2}=d, u_j+v_j=d-1, j\not=k_1, k_2\]
 Then there exists a line bundle  $L$  on $C$ and an $(r+1)$-dimensional  space of  sections of $L$ with orders of vanishing at $P$ (resp $ Q$ ) $u_i$ (resp $v_i$)
 if and only if $l$ divides  $u_{k_2}-u_{k_1}$ and $l$  does not divide  $u_{j}-u_{k_1}$ or $u_j+1-u_{k_1}, j\not=k_2$.
  \end{lemma}
\begin{proof} The only possible $L$ that could satisfy the conditions is $L={\mathcal O}(u_{k_1}P+v_{k_1}Q)={\mathcal O}(u_{k_2}P+v_{k_2}Q)$ where the second equality comes from the assumption that 
 $P-Q$ is a torsion element of order $l$ and that  $l$ divides  $u_{k_2}-u_{k_1}$.
 From the assumption that $u_j+v_j=d-1, j\not=k_1, k_2$, $ H^0(L(-u_jP-v_jQ))=1$.
 So, there exists one section $s_j$ of $L$ vanishing to order at least $u_j$ at $P$ and at least $v_j$ at $Q$.
 The orders of vanishing  of $s_j$ at $P,Q$ are precisely $u_j, v_j$ if and only if $L\not={\mathcal O}(u_jP+(v_j+1)Q),  L\not={\mathcal O}((u_j+1)P+v_jjQ)$. 
 This is equivalent to the condition that  $l$  does not divide  $u_{j}-u_{k_1}$ or $u_j+1-u_{k_1}$.
 
 As the $s_j$ constructed in this way have different vanishing orders at the points $P, Q$, they are linearly independent.
 Therefore, they span an $r+1$-dimensional space of sections of $L$.
 \end{proof}

We present first the simpler description of the components of the space of limit linear series on a non-generic chain of elliptic curves.  This is the case we will mostly use. 

 \begin{definition} \label{defadfrec} Given a collection of boxes arranged forming a  rectangle, an {\bf admissible filling }  assigns to each box  precisely one number in $1,\dots,g$
  so that  numbers in rows read from left to right and columns read from top to bottom  appear in strictly increasing order. Some of the numbers $1,\dots,g$  may be used  multiple times.
  We will indicate with $(a,b)$ the coordinates of a spot (column and row respectively).
 \end{definition}
 
 A similar concept called ``displacement tableaux'' was used in \cite{PBN} for chains of elliptic curves and in \cite{Pdisp} in the context of tropical geometry. 
 \bigskip
\begin{proposition} \label{prop:compesp} Choose $g, r, d$  positive integers.
Let $X$ be a  chain of $g$ elliptic curves generic except for   components $C_{i_1},\dots,C_{i_e}$  where the $l_i$ are the smallest positive indices such that 
\[ l_1(P_{i_1}-Q_{i_1})=0,\dots, l_e(P_{i_e}-Q_{i_e})=0 .\]
There is a one-to-one correspondence between the two following sets of data
\begin{enumerate}[(a)]
\item Components of the Brill-Noether locus  of refined limit linear series of degree $d$ and dimension $r$ on $X$ 
such that vanishing at $P_1, Q_g$ is $(0,\dots, r), (r,\dots, 0)$ and vanishing at  $P_i,Q_i$ satisfy $u_j^i+v_j^i\ge d-1$.
\item  Admissible fillings of an $(r+1)\times (g-d+r)$ rectangle so that
\begin{itemize}
\item Only the numbers $i_1,\dots, i_e$  may appear more than once 
 \item When $i_k$ appears on spots  $(a_1,b_1) $ and   $(a_2,b_2) $ then $l_k$ divides the grid distance $a_2-a_1+b_2-b_1$.
 \item No index $i$  on spot  $(a_3,b_3) $  such that $l_k$ divides $a_3-a_1+b_3-b_1$ could be replaced by $i_k$ and leave a  filling that is still admissible.
 \end{itemize}
 \end{enumerate}
\end{proposition}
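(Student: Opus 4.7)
The plan is to extend the Young-tableaux description of components of $G^r_d$ from the generic-chain case of \cite{Ramif} by incorporating the finite torsion ambiguities carried by $C_{i_1}, \dots, C_{i_e}$.

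The basic set-up is to classify, on each $C_i$ and each $k \in \{0,\dots,r\}$, the pair $(i,k)$ as \emph{type I} (when $u^i_k + v^i_k = d$) or \emph{type II} (when $u^i_k + v^i_k = d-1$); by Lemma \ref{lemsecLelcurv} together with the hypothesis $u^i_j + v^i_j \ge d-1$, one of the two must hold. The refined compatibility $u^{i+1}_k + v^i_k = d$ makes type II correspond to the unit increment $u^{i+1}_k = u^i_k + 1$ and type I to $u^{i+1}_k = u^i_k$. The forward map would send a refined limit linear series to the tableau whose row $k$ (with appropriate orientation) lists, in increasing order, the components where $(i,k)$ is type I. The boundary conditions $u^1_k = k$ and $v^g_k = r-k$ pin down exactly $g-d+r$ type-I events per row, giving the correct column count; row strictness is immediate from the ordering, and column strictness follows from the strict inequality $u^i_k < u^i_{k+1}$ maintained throughout the chain, essentially as in \cite{Ramif}.

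The genuinely new ingredient is the treatment of repeated labels, where the torsion enters in an essential way. On any $C_i$, two type-I events at distinct indices $k_1 < k_2$ force $L_i = \mathcal{O}(u^i_{k_1} P_i + v^i_{k_1} Q_i) = \mathcal{O}(u^i_{k_2} P_i + v^i_{k_2} Q_i)$ by Lemma \ref{lemsecLelcurv}, hence $(u^i_{k_2} - u^i_{k_1})(P_i - Q_i) = 0$. For a generic $C_i$ this is impossible, so $i \notin \{i_1, \dots, i_e\}$ appears at most once in the filling. For $i = i_m$ it forces $l_m \mid u^{i_m}_{k_2} - u^{i_m}_{k_1}$. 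A direct calculation from the row-strict structure yields $u^{i_m}_{k} = k + (i_m - 1) - c$ whenever $i_m$ occupies the cell $(k, c)$ (the $c$ entries strictly to the left of that cell record the earlier type-I events in row $k$, leaving $i_m - 1 - c$ type-II increments on the preceding components). Substituting then produces the divisibility $l_m \mid (r_2 - r_1) \pm (c_2 - c_1)$, matching the proposition after fixing the row-indexing convention.

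For the inverse construction, a filling reconstructs the vanishings by the same formula, determines $L_i$ via Lemma \ref{lemsecLelcurv} on every component carrying a type-I event (the torsion condition guaranteeing that the distinct presentations of $L_i$ agree), and allows $L_i$ to vary in a one-dimensional family on components with only type-II events; the space $V_i$ is then read off from the vanishing data. This produces an irreducible family of refined limit linear series, hence a single component of the Brill-Noether locus, and distinct fillings give distinct components because the filling is a discrete invariant of a generic series. The main technical obstacle is the column-strictness step, but this is essentially the argument of \cite{Ramif}; the genuinely new content here is the arithmetic of the torsion condition for repeated labels, which reduces to a direct computation once the formula for $u^{i_m}_k$ is in hand.
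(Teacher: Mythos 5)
Your proposal is correct and follows essentially the same route as the paper's proof: the type I/type II dichotomy at each node is exactly the paper's rule for when an index is placed in the tableau, your formula $u^{i_m}_k = k + (i_m-1) - c$ is the paper's $u_j^i = j+i-1-|\{a<i \mid a \text{ in column } j\}|$ up to transposing the row/column convention, and the torsion divisibility, the well-definedness of $L_i$, and the count showing each row/column receives exactly $g-d+r$ type-I events all match the paper's argument. The only differences are notational (your rows are the paper's columns) and that you delegate the column-strictness verification to \cite{Ramif}, which the paper instead spells out via its condition $(*)$.
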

\begin{remark} Linear series in which  $u_j^1+v_j^i\le d-2$ are in a sense  wasteful and less likely to be possible when the Brill-Noether number is negative.
They give rise to positive dimensional subsets  of the Brill-Noether locus in which the line bundle is fixed and only the space of sections vary (see \cite{CLPT})
We will consider this possibility  in  \ref{teor:compgen}  and then see in \ref{teor:sllimplicasllesp}  that when the general case is possible for a chain, so is  the special case of  $u_j^1+v_j^i\ge d-1$. 
An example for both types of filling and correspondences is shown in Example\ref{exfillls}.
Some readers may prefer to look at the example before the  proofs that follow.
\end{remark}
\begin{proof} Fix an admissible filling of the  $(r+1)\times (g-d+r)$ rectangle as listed.
We construct a limit linear series associated to it:

 First define vanishing orders at the nodes inductively  as follows
\[\text {For } j=0,\dots, r,\  u_j^1=j, \  u_j^i=\begin{cases}u_j^{i-1}  & i-1 \text{ is on column } j  \\u_j^{i-1}+1 &  \text{ otherwise}\end{cases} \ i=2,\dots, g. \]
\[ \text {For } j=0,\dots, r,\  v_j^i=d-u_j^{i+1}, \ \  i=1,\dots g-1, \ \ v_j^g=d-j.\]
In particular, $u_j^i=j+i-1-|\{ k<i|\  k\text{ is in column }j \}|$. 

One of the conditions on the fillings of the rectangle  is that rows need to be in strictly increasing order. This  guarantees that $u_j^i<u_{j+1}^i$.
Moreover,  
\[ (*)\ \  u_j^i+1=u_{j+1}^i \Leftrightarrow |\{ k<i| k\text{ is in column }j \}|=|\{ k<i| k\text{ is in column }j +1\}|.\]
When (*) holds, as rows are strictly increasing, $i$ cannot be placed in column $j+1$. This ensures that in all cases,  $v_j^i>v_{j+1}^i$.

If $i\in \{ 1,2,\dots,g\}$ does not appear  in the filling of the Tableau, take $L_i$ a generic line bundle of degree $d$.
In that case, for every $j$,  $u_j^{i+1}=u_j^i+1$ and therefore, $v_j^i=d-u_j^{i+1}=d-u_j^i-1$. 
There is then a well determined $(r+1)$-dimensional space of sections of $L_i$ with the giving vanishing at $P_i, Q_i$.

If $i\in \{ 1,2,\dots,g\}$ appears in at least one spot in the filling of the Tableau in spot $(j, y)$ then , $u_j^{i+1}=u_j^i$. 
Therefore, $v_j^i=d-u_j^{i+1}=d-u_j^i$. 
Define $L_i={\mathcal O}(u_j^iP_i+v_j^iQ_i)$ which is a line bundle of degree $d$.

If $i$ appears in the Tableau at least twice,  on  spots  $(a_1,b_1) $ and   $(a_2,b_2) $
\[ u_{a_2}^i-u_{a_1}^i=a_2-a_1+|\{ k<i|\  k\text{ is in column }a_2 \}|-|\{ k<i| \ k\text{ is in column }a_1\}|=a_2-a_1+b_2-b_1\]
The conditions $ l_i(P_i-Q_i)=0$, $l_i$ divides $b_2-b_1+a_2-a_1$ guarantee that ${\mathcal O}(u_{a_2}^iP_i+v_{a_2}^iQ_i)={\mathcal O}(u_{a_1}^iP_i+v_{a_1}^iQ_i)$.

Therefore, $L_i$ is well defined in all instances.
As before, take the space of sections of $L_i$ with the given vanishing at $P_i, Q_i$.

Conversely, assume given the data of a limit linear series on $X$ satisfying  $u_j^i+v_j^i\ge d-1$ and  whose vanishing at $P_1, Q_g$ is $(0,\dots, r), (r,\dots, 0)$.
We fill the rectangle  by adding successively the indices $1,\dots, g$ in the boxes on the first empty spot of column $j$ if $u_j^i+v_j^i=d$.
In particular, if this happens, as the line bundles have degree $d$, $L_i={\mathcal O}( u_j^iP_i+v_j^iQ_i)$.
Then, the  condition that $P_i, Q_i$ are generic on $C_i$ except on $C_{i_1},\dots,C_{i_e}$  where  $l_{i_j}(P_{i_j}-Q_{i_j})=0 $ 
means that only the indices $i_1,\dots, i_e$ may need to be placed in  more than one spot.
Moreover, if $i_k$ needs to be placed at $(a_1,b_1) $ and   $(a_2,b_2) $, then $u_{a_1}^{i_k}+v_{a_1}^{i_k}=d, u_{a_2}^{i_k}+v_{a_2}^{i_k}=d$.
This requires $L_{i_k}={\mathcal O}( u_{a_1}^{i_k}P_{i_k}+v_{a_1}^{i_k}Q_{i_k})={\mathcal O}( u_{a_2}^{i_k}P_{i_k}+v_{a_2}^{i_k}Q_{i_k})$.
From the genericity of the curve except on the components $C_{i_k}$, $l_k$ divides $u_{a_1}^{i_k}-u_{a_2}^{i_k}$.
As the series is refined,  $u_j^{t+1}=d-v_j^t$.
From  the vanishing  $(0,\dots, r)$ at $P_1$  and the condition $v_j^t=d-u_j^t-1$ unless the index $t$ is placed in column $j$, in which case  $v_j^t=d-u_j^t$,
we get $ u_{a_1}^{i_k}= a_1+i_k-b_1, v_{a_1}^{i_k}=d-a_1-i+b_1$.
Therefore, $u^{i_k}_{a_2}-u^{i_k}_{a_1}=a_1+b_1-a_2-b_2 $ and the repeated indices are  placed according to the required distances.

It remains to check that the whole rectangle is filled.
By assumption, $v_j^g=r-j$. 
By construction, $v_j^g=d-u_j^1-g+|\{ k |\ k\text{ is placed in column } j \}|=d-j-g+|\{ k |\ k\text{ is placed in column } j \}|$.
Therefore, $|\{ k |\ k\text{ is placed in column } j \}|=g-d+r$, meaning that all the rows on column $j$ get filled.
\end{proof}

\begin{definition}\label{def:admfil}  Given  integers $g\ge 2$, $i_1,\dots, i_e\in \{ 1,\dots, g\}, l_1,\dots, l_e, 0\le e\le g, l_i\ge 2$ and a rectangle consisting of $\alpha\times \beta $ squares, 
a  filling of the rectangle  assigns to each box  on the vertical strip containing the rectangle (including boxes above and below it) numbers among $ 1,\dots, g$, each with a positive or negative weight of 1. 
The $i$-weight $\mathbf {w^i(a,b)}$ of a box is the sum of the weights of the numbers $i'\le i$ that appear on it
By definition, the 0-weight of a box is 1 for any squares above the given rectangle and 0 for any other box.
We  call the filling  an  {\em admissible filling}  if it satisfies
\begin{enumerate}[(a)]
\item Only the numbers $i_1,\dots, i_e$  appear more than once with weight 1
 and when $i_k$ appears on  spots  $(a_1,b_1) $ and   $(a_2,b_2) $,  then $l_k$ divides $b_2-b_1+a_2-a_1$.
Any number may appear multiple times with weight $-1$.
\item Each box has at most one occurrence of a given number (including both positive and negative weight).
\item For each choice of an $i, 0\le i\le g$, the $i$-weight of a box  is  either 0 or 1.
\item  The $i$-weight of any given box is greater than or equal to the weight of a box  to its right.
\item   The $i$-weight of any given box is greater than or equal to the weight of a box   below.
\item  The $g$-weight of a box is 1 for the boxes lying inside or above the rectangle  and 0 for those lying below that lower side of the rectangle.
\end{enumerate}
\end{definition}
\begin{remark}
 We are assuming that the filling can be placed  in rows above and below the rectangle. 
 The weights of any number placed outside the rectangle cancel each other.
\end{remark}

\begin{proposition} \label{teor:compgen} Choose $g, r, d$.
Let $X$ be a  chain of $g$ elliptic curves generic except for   components $C_{i_1},\dots,C_{i_e}$  where 
\[ l_1(P_{i_1}-Q_{i_1})=0,\dots, l_e(P_{i_e}-Q_{i_e})=0 .\]
There is a one-to-one correspondence between the components of the Brill-Noether locus  of limit linear series of degree $d$ and dimension $r$ on $X$ 
 and admissible  fillings of  an $(r+1)\times (g-d+r)$ rectangle as described in \ref{def:admfil}.
\end{proposition}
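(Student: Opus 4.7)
The plan is to extend the bijection of Proposition \ref{prop:compesp} to all components of the Brill-Noether locus, not just those satisfying $u_j^i+v_j^i\ge d-1$. The new phenomenon in the general case is that the vanishing difference $u_j^{i+1}-u_j^i$ can be $\ge 2$, which by Lemma \ref{lemsecLelcurv} corresponds to the line bundle $L_i$ being free to move in a positive-dimensional family; these ``wasteful'' steps are precisely what the weighted entries placed outside the rectangle in Definition \ref{def:admfil} are designed to record.

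Given an admissible filling, I first construct a limit linear series. For each step $i$, condition (c) says every box has $i$-weight either $0$ or $1$, and conditions (d), (e) say the $1$-weight boxes in each column form an initial segment reading from the top. Let $h_j^i$ denote the height of this segment in column $j$ after step $i$; the initial data (0-weight $=1$ above the rectangle) and condition (f) (g-weight $=1$ inside and above the rectangle) force $h_j^0$ and $h_j^g$ to match the prescribed vanishing $(0,\dots,r)$ at $P_1$ and $(r,\dots,0)$ at $Q_g$. Setting $u_j^i$ to be essentially $h_j^{i-1}$ suitably shifted and $v_j^i := d-u_j^{i+1}$, the row/column staircase conditions (d), (e) give strict monotonicity $u_0^i<\cdots<u_r^i$ and $v_0^i>\cdots>v_r^i$, and one has $u_j^i+v_j^i=d$ exactly when index $i$ appears with positive weight in column $j$. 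On each component $C_i$, condition (a) together with Lemma \ref{lemsecLelcurv} ensures the forced line bundle values $\mathcal{O}(u_j^iP_i+v_j^iQ_i)$ across all columns $j$ where such a forcing occurs are compatible, using the torsion hypotheses $l_k(P_{i_k}-Q_{i_k})=0$; when $i$ does not appear with any positive weight, $L_i$ moves in a one-parameter family and contributes to the dimension of the component.

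Conversely, from a refined limit linear series on $X$ I recover the filling by tracking how the staircase $\{h_j^i\}$ evolves as $i$ runs from $1$ to $g$. Whenever $u_j^{i+1}=u_j^i$ (so by Lemma \ref{lemsecLelcurv} the line bundle $L_i$ is forced), I place $i$ with positive weight at the box just above the current boundary in column $j$; this box lies inside the rectangle exactly when $u_j^i+v_j^i=d$ and the forcing is essential. When $u_j^{i+1}-u_j^i\ge 2$, I introduce compensating positive/negative weight pairs outside the rectangle to absorb the extra slack while keeping each $i$-weight in $\{0,1\}$. Conditions (a)--(f) then translate directly into the Plücker inequalities $u_j^{i+1}+v_j^i\ge d$, the strict monotonicity of the vanishing sequences, the bound $u_j^i+v_j^i\le d$ from Lemma \ref{lemsecLelcurv}, and the compatibility of the line bundle forcings on special components.

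The main obstacle will be the dimension count: I must verify that each admissible filling parameterizes a family of limit linear series whose dimension matches the expected dimension of the corresponding component of the Brill-Noether locus, and that distinct fillings produce distinct components. This requires carefully accounting for the moduli contributed by each $C_i$ whose $L_i$ is not forced, ensuring that the gluing across all components does not merge families in an unintended way, and checking that the special components $C_{i_k}$ with repeated indices contribute exactly as predicted by the torsion relations. The combinatorial bookkeeping of weights outside the rectangle, in particular making precise the sense in which positive/negative pairs ``cancel'' yet still record moduli of $L_i$, is the delicate part of the argument.
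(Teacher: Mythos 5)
Your outline follows the same route the paper takes: the paper's entire proof of Proposition \ref{teor:compgen} is a one-line reduction to Theorem 3.4 of \cite{Ramif}, noting only that the non-generic components now permit an index to occupy several boxes (exactly as in Proposition \ref{prop:compesp}), and your staircase-of-weights reconstruction is a faithful sketch of that cited argument, including the correct reading of the cancelling weights outside the rectangle as recording the unforced (``wasteful'') line bundles. The issues you flag at the end --- that each filling yields a single component of the expected dimension and that distinct fillings yield distinct components --- are precisely the content of Theorem 3.4 of \cite{Ramif}, which the paper does not reprove, so nothing in your plan is off-track.
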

\begin{proof} The proof is  analogous   to the proof of Theorem 3.4 in \cite{Ramif} except that now we do not assume the pairs of points $P_i, Q_i$ on each elliptic component to be generic. 
This allows us to place the index of  that component in multiple spots as was the case in Proposition \ref{prop:compesp} above.
\end{proof}

\begin{example}\label{exfillls}
\begin{enumerate}[(a)]
\item We describe the limit linear series corresponding to the positive filling on the left panel of Figure \ref{figrectfil}.
Take a chain of $10$ elliptic curves, generic except for the choice of the gluing points on $C_5$ where $3(P_5-Q_5)=0$.
Here $r=1, d=7$.
On the curves $C_1, C_2, C_7$ take generic line bundles.
Take 
\[ L_3={\mathcal O}(2P_3+5Q_3), L_4={\mathcal O}(2P_4+5Q_4), \ L_5={\mathcal O}(2P_5+5Q_5)={\mathcal O}(5P_5+2Q_5), L_6={\mathcal O}(2P_6+5Q_6),\]
\[ L_8={\mathcal O}(7P_8) , L_9={\mathcal O}(7P_9) , L_{10}={\mathcal O}(7P_{10}) \]
Writing $s^i_{a,b}, t^i_{a,b}$ for a section of $L_i$ vanishing at $P_i, Q_i$ with orders $a, b, a+b=d$ or $a+b=d-1$, we take the space of sections spanned by the following sections
\[ (s^1_{0,6}, t^1_{1,5}), \  (s^2_{1,5}, t^2_{2,4}), \  (s^3_{2,5}, t^2_{3,3}), \  (s^4_{2,5}, t^4_{4,2}), \  (s^5_{2,5}, t^5_{5,2}), \  (s^6_{2,5}, t^6_{5,1}), \  (s^7_{2,4}, t^7_{6,0}), \  (s^8_{3,3}, t^7_{7,0}),
 \  (s^9_{4,2}, t^9_{7,0}) , \  (s^{10}_{5,1}, t^{10}_{7,0})\ \]

\begin{figure}[h!]
\begin{tikzpicture}[scale=.8]

\begin{scope}[xscale=1.5, xshift=4cm]
\foreach \x in {0, 1,2,3,4} {\draw[thick] (0,\x) -- (2, \x); }
\foreach \x in {0, 1,2} {\draw[thick] (\x,0) -- ( \x,4); }
\node at (.2, 3.7){1 };
\node[color=red] at (.8, 3.7){$2^{-}$ };
\node at (.5, 3.2){3 };
\node at (.5, 2.5){4}; 
\node[color=cyan]  at (1.5, 3.5){5};
\node[color=blue] at (.5, 1.5){5}; 
\node[color=cyan] at (1.2, 2.7){6};
\node[color=red] at (1.8, 2.7){$7^{-}$ };
\node at (1.5, 2.2){8}; 
\node[color=blue] at (.5, .5){6}; 
\node at (1.5, 1.5){9}; 
\node at (1.5, .5){10}; 
\end{scope}

\begin{scope}
\foreach \x in {0, 1,2,3,4} {\draw[thick] (0,\x) -- (2, \x); }
\foreach \x in {0, 1,2} {\draw[thick] (\x,0) -- ( \x,4); }
\node at (.5, 3.5){3 };
\node at (.5, 2.5){4}; 
\node[color=cyan] at (1.5, 3.5){5};
\node[color=blue] at (.5, 1.5){5}; 
\node at (1.5, 2.5){8}; 
\node at (.5, .5){6}; 
\node at (1.5, 1.5){9}; 
\node at (1.5, .5){10}; 

\end{scope}

\end{tikzpicture}
\caption{ An arbitrary filling(right) and  corresponding positive filling(left).}
\label{figrectfil}
\end{figure}
\item We describe the limit linear series corresponding to the  filling on the right panel of Figure \ref{figrectfil}.
Take a chain of $10$ elliptic curves, generic except for the choice of the gluing points on $C_5$ where $3(P_5-Q_5)=0$ and on $C_6$ where $3(P_6-Q_6)=0$.
Again, $r=1, d=7$.
On the curves $ C_2, C_7$ take generic line bundles.
Take 
\[ L_1={\mathcal O}(7Q_1), L_3={\mathcal O}(2P_3+5Q_3), L_4={\mathcal O}(2P_4+5Q_4), \ L_5={\mathcal O}(2P_5+5Q_5)={\mathcal O}(5P_5+2Q_5), \]
\[ L_6={\mathcal O}(2P_6+5Q_6)={\mathcal O}(5P_6+2Q_6), L_8={\mathcal O}(7P_8) , L_9={\mathcal O}(7P_9) , L_{10}={\mathcal O}(7P_{10}) \]
Writing $s^i_{a,b}, t^i_{a,b}$ for a section of $L_i$ vanishing at $P_i, Q_i$ with orders $a, b$, we take spaces of sections spanned by the following sections
\[ (s^1_{0,7}, t^1_{1,5}), \  (s^2_{0,5}, t^2_{2,4}), \  (s^3_{2,5}, t^2_{3,3}), \  (s^4_{2,5}, t^4_{4,2}), \  (s^5_{2,5}, t^5_{5,2}), \  (s^6_{2,5}, t^6_{5,2}), \  (s^7_{2,4}, t^7_{5,0}), \  (s^8_{3,3}, t^7_{7,0}),
 \  (s^9_{4,2}, t^9_{7,0}) , \  (s^{10}_{5,1}, t^{10}_{7,0})\ \]
\end{enumerate}
\end{example}

\begin{proposition} \label{teor:sllimplicasllesp} Choose $g, r, d$.
Let $X$ be a  chain of $g$ elliptic curves generic except for   components $C_{i_1},\dots,C_{i_e}$  where 
\[ l_1(P_{i_1}-Q_{i_1})=0,\dots, l_e(P_{i_e}-Q_{i_e})=0 .\]
If there is a limit linear series on $X$, then there is a limit linear series whose vanishing at $P_1, Q_g$ is $(0,\dots, r), (r,\dots, 0)$ and $P_i,Q_i$ satisfy $u_j^i+v_j^i\ge d-1$.
\end{proposition}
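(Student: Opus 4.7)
The plan is to recast the question combinatorially via Propositions \ref{teor:compgen} and \ref{prop:compesp}. By Proposition \ref{teor:compgen}, the hypothesis gives an admissible filling $\mathcal{F}$ of the $(r+1)\times(g-d+r)$ rectangle in the sense of Definition \ref{def:admfil}. By Proposition \ref{prop:compesp}, the conclusion is equivalent to the existence of a simple admissible filling $\mathcal{F}'$ (Definition \ref{defadfrec}) respecting the same divisibility constraints at the components $C_{i_k}$. So the proposition reduces to the purely combinatorial statement that the existence of $\mathcal{F}$ forces the existence of $\mathcal{F}'$.

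From $\mathcal{F}$ I would construct a candidate $\mathcal{F}'$ by assigning to each box $(a,b)$ of the rectangle the label $n(a,b)$ defined as the largest step $i$ at which $i$ is placed with positive weight at $(a,b)$ in $\mathcal{F}$; equivalently, the step at which $(a,b)$ permanently enters the Young shape $Y_j$ consisting of the boxes of $j$-weight $1$. Condition (f) of Definition \ref{def:admfil} guarantees that $n(a,b)$ is well-defined for every rectangle box. The requirement that only the $i_k$'s may repeat, and that the divisibility condition holds on repetitions, is then inherited directly from condition (a) of Definition \ref{def:admfil}, since each label $n(a,b)$ records an actual positive placement in $\mathcal{F}$.

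The main obstacle is verifying strict monotonicity of $n$ along rows and columns. Adjacent equalities $n(a,b)=n(a+1,b)$ or $n(a,b)=n(a,b+1)$ would force $l_k\mid 1$, contradicting $l_k\geq 2$. The subtle case is a non-adjacent inversion such as $n(a,b)>n(a,b+1)$: here I would track the weight evolution in row $a$ using conditions (d) and (e). The permanent entry of $(a,b+1)$ at step $n(a,b+1)$, combined with rightward monotonicity of weight, forces $(a,b)$ into $Y_{n(a,b+1)}$; for $(a,b)$ to enter permanently only at the strictly later step $n(a,b)$, it must leave and re-enter, producing intermediate positive and negative placements of some labels whose distribution the divisibility constraints on the relevant $i_k$'s rule out. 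Once this verification is complete, $\mathcal{F}'$ is a simple admissible filling and Proposition \ref{prop:compesp} associates to it a limit linear series with the ramification and vanishing properties demanded by the proposition.
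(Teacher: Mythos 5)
Your construction is exactly the paper's: the paper likewise reduces to the combinatorial statement via Propositions \ref{teor:compgen} and \ref{prop:compesp} and then produces the positive filling by keeping in each box only the last index that appears there with positive weight, i.e.\ your $n(a,b)$. One correction to your verification sketch (which the paper omits entirely): the leave-and-re-enter scenario behind a non-adjacent inversion is ruled out directly by conditions (d) and (e) of Definition \ref{def:admfil} --- a negative placement at $(a,b)$ after $(a,b+1)$ has permanently entered would force $w^i(a,b)=0<1=w^i(a,b+1)$ --- and not by divisibility, since condition (a) imposes no divisibility constraint on negatively weighted occurrences; divisibility together with $l_k\ge 2$ is needed only to upgrade the weak inequality $n(a,b)\le n(a,b+1)$ (and its column analogue) to a strict one.
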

\begin{proof} In view  of Propositions \ref{prop:compesp} , \ref{teor:compgen}, it suffices to see that the existence of an admissible filling of   $(r+1)\times (g-d+r)$ rectangle as described in \ref{def:admfil}
implies the existence of that same rectangle with only positive weights.
This can be built by removing from the filling of each square of the rectangle all but the last index that appears with positive weight 1.

The right pane of Figure \ref{figrectfil} shows an arbitrary filling of the rectangle whose associated positive filling is on the left pane.
\end{proof}

\begin{lemma}\label{lemseptor} Assume that  $\alpha\le \beta$ and that $e$ is a positive integer with 
 \[  e\le \frac{\alpha^2+\alpha-4}2 \text{ if }\alpha\le \beta -2;\ \   e\le \frac{\alpha^2-\alpha}2 \text{ if }\alpha=\beta , \beta -1.\]
 Define $k, j$ by
\[ \frac{k(k+1)}2\le  e< \frac{(k+1)(k+2)}2, \  \  j=e- \frac{k(k+1)}2. \]
Consider an admissible filling of an $\alpha\times \beta$ rectangle as in Definition \ref{defadfrec}  such that the only indices that appear twice are  $i_1,\dots i_e$.
If  $i_t$ appears on  spots  $(a^{i_t}_1,b^{i_t}_1) $ and   $(a^{i_t}_2,b^{i_t}_2) $, then, 
\[ \sum_{t=1}^e(|b^{i_t}_2-b^{i_t}_1|+|a^{i_t}_2-a^{i_t}_1|)\le e(\alpha+\beta-2)-2(\frac {k^3-k}3+jk).       \]
There are admissible fillings of the rectangle with the numbers $1,2,\dots, \alpha\beta-e$ for which this inequality is an equality.
\end{lemma}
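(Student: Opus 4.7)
My plan is to rewrite each pair distance as a deficit from the maximum $\alpha+\beta-2$, split this deficit into a ``northeast'' slack $N_t$ and a ``southwest'' slack $S_t$, and then minimize $\sum N_t$ and $\sum S_t$ independently over the possible placements of the top-right and bottom-left occurrences.

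After ordering the two occurrences of each $i_t$ so that $r^{i_t}_1<r^{i_t}_2$, strict row- and column-monotonicity forces $c^{i_t}_1>c^{i_t}_2$: otherwise the entry at $(r^{i_t}_1,c^{i_t}_2)$ would have to be both strictly greater than and strictly less than $i_t$. Setting
\[
N_t=(r^{i_t}_1-1)+(\beta-c^{i_t}_1),\qquad S_t=(\alpha-r^{i_t}_2)+(c^{i_t}_2-1),
\]
both nonnegative, the $t$-th distance equals $(\alpha+\beta-2)-N_t-S_t$, so
\[
\sum_t\bigl(|r^{i_t}_2-r^{i_t}_1|+|c^{i_t}_2-c^{i_t}_1|\bigr)=e(\alpha+\beta-2)-\sum_t N_t-\sum_t S_t.
\]
The top-right occurrences are $e$ distinct boxes of the rectangle, so $\sum_t N_t$ is at least the minimum of $\sum_{(r,c)\in T'}((r-1)+(\beta-c))$ over all $e$-subsets $T'$ of the rectangle. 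The function $(r,c)\mapsto (r-1)+(\beta-c)$ is constant on anti-diagonals, with $l+1$ boxes at level $l$ provided $l\le\min(\alpha,\beta)-1$; choosing the $e=\frac{k(k+1)}{2}+j$ positions of smallest level (levels $0,\dots,k-1$ in full plus $j$ entries at level $k$) gives the minimum
\[
\sum_{l=0}^{k-1} l(l+1)+jk \;=\; \frac{(k-1)k(k+1)}{3}+jk \;=\; \frac{k^3-k}{3}+jk.
\]
A direct check shows that in both cases of the hypothesis one has $k\le\alpha-1\le\min(\alpha,\beta)-1$ (in the case $\alpha<\beta$ because $\frac{(\alpha+2)(\alpha-1)}{2}=\frac{(\alpha-1)\alpha}{2}+(\alpha-1)$, and in the case $\alpha=\beta$ because $\frac{\alpha^2-2}{2}=\frac{(\alpha-1)\alpha}{2}+\frac{\alpha-2}{2}$), so the formula is valid. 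The symmetric argument applied to the bottom-left occurrences bounds $\sum_t S_t$ by the same quantity, and adding the two inequalities proves the asserted upper bound on $\sum_t d_t$.

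To achieve equality, I would construct an explicit admissible filling. Take $T$ to be the northeast staircase (the $e$ positions of smallest NE-value, placed inside the admissible region $[1,\alpha-1]\times[2,\beta]$) and $B$ the symmetric southwest staircase inside $[2,\alpha]\times[1,\beta-1]$. The two bounds on $e$ are precisely what is needed for $T$ and $B$ to fit in these regions while remaining disjoint; the slightly stronger bound when $\alpha=\beta$ accounts for the fact that both staircases compete for positions on the main diagonal at level $\alpha-1$. Pair the $t$-th element of $T$ with the $t$-th element of $B$, both taken in lexicographic order; by the staircase geometry, in every shared row each element of $B$ lies strictly to the left of each element of $T$, and in every shared column each element of $T$ lies strictly above each element of $B$. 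Assigning the doubled indices $i_1<\cdots<i_e$ to the pairs in this order and filling the remaining $\alpha\beta-2e$ boxes with the leftover labels from $\{1,\dots,\alpha\beta-e\}$ gives an admissible filling that attains the bound. I expect the main obstacle to be the verification of admissibility of this construction: one must check that the canonical pairing induces an acyclic row/column order on the equivalence classes of positions, which reduces to confirming that the cardinalities of the NE- and SW-staircase levels in each row (and each column) align correctly so that all $B$-pair-ranks precede all $T$-pair-ranks.
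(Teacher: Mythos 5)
Your proof of the upper bound is complete and correct, and it rests on the same idea as the paper's: after orienting each pair so that the first occurrence sits toward one corner and the second toward the diagonally opposite corner (forced by strict monotonicity of rows and columns), write each distance as $(\alpha+\beta-2)$ minus the two anti-diagonal ``levels'' from those corners, and bound the two level-sums separately by the greedy count $\sum_{l=0}^{k-1}l(l+1)+jk=\frac{k^3-k}{3}+jk$. Your version is in fact cleaner and more rigorous than the paper's informal tally of ``spots close to each corner.'' (One small remark: for the inequality direction you do not need $k\le\alpha-1$ at all, since each level $l$ contains \emph{at most} $l+1$ boxes; the hypotheses on $e$ are only used for attainability, which is also how the paper uses them.)

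The gap is in the equality half, and you have located it yourself: the assertion that the two corner staircases, paired lexicographically, extend to an admissible filling is exactly the content that needs proof, and it is where essentially all of the paper's effort goes (the explicit $a_i,b_i,\epsilon_i$ bookkeeping and the inductive column-by-column filling). Two points are genuinely at issue. First, your prescription does not say which $j$ boxes of the level-$k$ anti-diagonal to use; in the boundary case $\alpha=\beta$, $k=\alpha-1$, $j>0$ the level-$k$ anti-diagonals from the two corners coincide with the main diagonal, so the $j$ northeast boxes and $j$ southwest boxes must be chosen disjoint \emph{and} interleaved so that the induced order constraints remain consistent -- the paper takes alternating spots for precisely this reason, and taking ``largest possible column value'' for both corners would collide. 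Second, when $2k>\alpha$ the two staircases share rows and columns, so coordinatewise comparabilities between a northeast box of one pair and a southwest box of another do occur; acyclicity of the quotient order (equivalently, that every forced inequality among the pair labels is compatible with a single linear order) is then a real verification, not a formality. Until that check is carried out -- by your acyclicity argument or by an explicit inductive filling as in the paper -- the final sentence of the lemma is not established.
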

\begin{proof} We are trying to maximize the grid distance between the pairs of spots where we place repeated indices.

As there are $\beta$ rows and $\alpha$ columns, $1\le b^i_1<b^i_2\le \beta, 1\le a^i_2<a^i_1\le \alpha$.
Therefore 
\[ |b^i_2-b^i_1|\le \beta-1,\  |a^i_2-a^i_1|\le \alpha-1.\]
So an upper  bound for the grid distance between two squares of the rectangle is  $\alpha+\beta-2$.
The highest bound for both pairs of inequalities  occurs  when the two squares are placed in diagonally opposite spots of the rectangle $(1,\beta), (\alpha,1)$.
There are two spots close to each of the corners  (namely  $ (2,\beta), (1,\beta-1), (\alpha-1,1), (\alpha,2)$)
 where one of the indices for the row and column $r,c$ stays at the max(resp min) while the other decreases (resp increases) by one. 
There are three spots where the difference with the max or min is 3, \dots, there are $t$ squares close to each corner giving a distance of $t-1$ to the optimal distance up to $t=\alpha -1$.
For $t\ge \alpha$ (and assuming $\beta\ge \alpha +2$ so that the spots from the top and the bottom do not overlap),
 there are only $\alpha-1$ spots with optimal distance as we cannot place a double index close to the bottom left corner on the last column  or  a double index close to the top right corner on the first  column.
We only consider here $t\le \alpha$ and in the case $t=\alpha$ we only use the $\alpha-2$ spots at every corner that are in columns $2,3,\dots, \alpha-2$.
Note that  
\[1+2+3+\dots+k=\frac{k(k+1)}2, \ [1+2+3+\dots+(\alpha-1)]+(\alpha-2)=\frac{\alpha^2+\alpha-4}2.\]
 By assumption  $   e\le \frac{\alpha^2-\alpha}2 \text{ if }\alpha=\beta , \beta -1, \ \ e\le \frac{\alpha^2+\alpha-4}2 \text{ if }\alpha\le \beta -2$. 
 We defined  $k$ by  $ \frac{k(k+1)}2\le  e< \frac{(k+1)(k+2)}2$.
 Therefore $k\le \alpha-1$ and also by definition of $j$, $j\le k-1$.

\begin{figure}[h!]
\begin{tikzpicture}[scale=.55]

\begin{scope}[]
\fill [color=pink] (1,5)--(5,5)--(5,1)--(4,1)--(4,2)--(3,2)--(3,3)--(2,3)--(2,4)--(1,4)--(1,5);
\fill [color=pink] (0,0)--(0,4)--(1,4)--(1,3)--(2,3)--(2,2)--(3,2)--(3,1)--(4,1)--(4,0)--(0,0);

\foreach \x in {0, 1,2,3,4,5} {\draw[thick] (0,\x) -- (5, \x); }
\foreach \x in {0, 1,2,3, 4, 5} {\draw[thick] (\x,0) -- ( \x,5); }
\end{scope}

\begin{scope}[xshift=5.5cm]
\fill [color=pink] (1,6)--(5,6)--(5,2)--(4,2)--(4,3)--(3,3)--(3,4)--(2,4)--(2,5)--(1,5)--(1,6);
\fill [color=pink] (0,0)--(0,4)--(1,4)--(1,3)--(2,3)--(2,2)--(3,2)--(3,1)--(4,1)--(4,0)--(0,0);

\foreach \x in {0, 1,2,3,4,5,6} {\draw[thick] (0,\x) -- (5, \x); }
\foreach \x in {0, 1,2,3, 4, 5} {\draw[thick] (\x,0) -- ( \x,6); }
\end{scope}

\begin{scope}[xshift=11cm]
\fill [color=pink] (1,7)--(5,7)--(5,3)--(3,3)--(3,4)--(2,4)--(2,5)--(1,5)--(1,7);
\fill [color=pink] (0,0)--(0,4)--(2,4)--(2,3)--(3,3)--(3,2)--(4,2)--(4,0)--(0,0);
\foreach \x in {0, 1,2,3,4,5,6,7} {\draw[thick] (0,\x) -- (5, \x); }
\foreach \x in {0, 1,2,3, 4, 5} {\draw[thick] (\x,0) -- ( \x,7); }
\end{scope}

\begin{scope}[xshift=16.5cm]
\fill [color=pink] (1,8)--(5,8)--(5,4)--(3,4)--(3,5)--(2,5)--(2,6)--(1,6)--(1,8);
\fill [color=pink] (0,0)--(0,4)--(2,4)--(2,3)--(3,3)--(3,2)--(4,2)--(4,1)--(4,0)--(0,0);

\foreach \x in {0, 1,2,3,4,5,6,7,8} {\draw[thick] (0,\x) -- (5, \x); }
\foreach \x in {0, 1,2,3, 4, 5} {\draw[thick] (\x,0) -- ( \x,8); }
\end{scope}

\begin{scope}[xshift=22cm]
\fill [color=pink] (1,9)--(5,9)--(5,5)--(3,5)--(3,6)--(2,6)--(2,7)--(1,7)--(1,9);
\fill [color=pink] (0,0)--(0,4)--(2,4)--(2,3)--(3,3)--(3,2)--(4,2)--(4,1)--(4,0)--(0,0);

\foreach \x in {0, 1,2,3,4,5,6,7,8,9} {\draw[thick] (0,\x) -- (5, \x); }
\foreach \x in {0, 1,2,3, 4, 5} {\draw[thick] (\x,0) -- ( \x,9); }
\end{scope}

\end{tikzpicture}
\caption{ When $\alpha=5$, the shading show the corners that can be matched. For $\beta=5, 6$, we cover up to the two triangles in the corners, for $\beta>6$, in the middle columns we can add one extra square.}
\label{fig:5colanglesmarc}
\end{figure}

To maximize the sum of grid distances, we should use all the optimal spots  up to distance $k$ and $j$ of those at distance $k+1$.
This gives
\[ e(\alpha+\beta-2)-2[1\times 0+2\times 1+\dots +k(k-1)+jk]=e(\alpha+\beta-2)-2[\frac {k^3-k}3+jk].       \]      
Because we are using grid distance and we are taking their sum, it does not matter in which way we pair the squares at the two corners.
Adding the maximum distances, we obtain the upper bound.

To show that the bound is attained, we need to manufacture an admissible filling that fits the spots with the optimal separation.
We will do this in Lemma \ref{lemexfilcorn}.
\end{proof}

\begin{lemma}\label{lemexfilcorn} Assume that  $\alpha\le \beta$ and that $e$ is a positive integer with 
 \[  e\le \frac{\alpha^2+\alpha-4}2 \text{ if }\alpha\le \beta -2;\ \   e\le \frac{\alpha^2-\alpha}2 \text{ if }\alpha=\beta , \beta -1.\]
There exists then  an admissible filling of an $\alpha\times \beta$ rectangle as in Definition \ref{defadfrec}  such that at most $e$ of the indices appear twice 
and these indices are  placed at the maximal distance as described in Lemma \ref{lemseptor} .
\end{lemma}
\begin{proof} From the proof of Lemma \ref{lemseptor}, the double fillings must be as close as possible to the top right and bottom left corners (see pink region on the rectangles in Figure \ref{fig:5colanglesmarc}).


\begin{figure}[h!]
\begin{tikzpicture}[scale=.5]

\begin{scope}[ ]
\foreach \x in {0, 1,2,3,4} {\draw[thick] (0,\x) -- (4, \x); }
\foreach \x in {0, 1,2,3,4} {\draw[thick] (\x,0) -- ( \x,4); }

\node  [color=purple] at (.5, 3.5){1}; 
\node [color=cyan]  at (1.5, 3.5){2}; 
\node  [color=cyan] at (2.5, 3.5){3}; 
\node[color=cyan] at (3.5, 3.5){6}; 

\node [color=blue] at (.5, 2.5){2};
\node  [color=purple] at (1.5, 2.5){4}; 
\node [color=cyan]  at (2.5, 2.5){5}; 
\node [color=cyan]  at (3.5, 2.5){7}; 

\node [color=blue] at (.5, 1.5){3};
\node  [color=blue]  at (1.5, 1.5){6}; 
\node  [color=purple]   at (2.5, 1.5){8}; 
\node [color=cyan]  at (3.5, 1.5){9}; 

\node [color=blue] at (.5, 0.5){5};
\node  [color=blue]  at (1.5, 0.5){7}; 
\node   [color=cyan]  at (2.5, 0.5){9}; 
\node  [color=purple] at (3.5, 0.5){10}; 
\end{scope}

\begin{scope}[xshift=5cm]
\foreach \x in {0, 1,2,3,4, 5} {\draw[thick] (0,\x) -- (5, \x); }
\foreach \x in {0, 1,2,3,4,5} {\draw[thick] (\x,0) -- ( \x,5); }

\node  [color=purple] at (.5, 4.5){1}; 
\node  [color=cyan]  at (1.5, 4.5){2}; 
\node  [color=cyan] at (2.5, 4.5){3}; 
\node[color=cyan] at (3.5, 4.5){6}; 
\node[color=cyan] at (4.5, 4.5){8}; 

\node [color=blue] at (.5, 3.5){2};
\node   [color=purple]   at (1.5, 3.5){4}; 
\node [color=cyan] at (2.5, 3.5){5}; 
\node [color=cyan] at (3.5, 3.5){7}; 
\node[color=cyan] at (4.5, 3.5){11}; 

\node[color=blue] at (.5, 2.5){3}; 
\node [color=blue] at (1.5, 2.5){6}; 
\node    [color=purple]    at (2.5, 2.5){9}; 
\node [color=cyan] at (3.5, 2.5){10}; 
\node[color=cyan] at (4.5, 2.5){12}; 

\node[color=blue] at (.5, 1.5){5}; 
\node[color=blue] at (1.5, 1.5){8}; 
\node   [color=blue] at (2.5, 1.5){11}; 
\node    [color=purple] at (3.5, 1.5){13}; 
\node [color=cyan]at (4.5, 1.5){14}; 

\node[color=blue] at (.5, .5){7}; 
\node[color=blue] at (1.5, .5){10}; 
\node [color=blue] at (2.5, .5){12}; 
\node   [color=blue]  at (3.5, .5){14}; 
\node  [color=purple] at (4.5, .5){15}; 
\end{scope}

\begin{scope}[xshift=11cm]
\foreach \x in {0, 1,2,3,4, 5,6} {\draw[thick] (0,\x) -- (6, \x); }
\foreach \x in {0, 1,2,3,4,5,6} {\draw[thick] (\x,0) -- ( \x,6); }

\node  [color=purple] at (.5, 5.5){1}; 
\node  [color=cyan]at (1.5, 5.5){2}; 
\node [color=cyan] at (2.5, 5.5){3}; 
\node[color=cyan] at (3.5, 5.5){6}; 
\node[color=cyan] at (4.5, 5.5){8}; 
\node[color=cyan] at (5.5, 5.5){12}; 

\node [color=blue] at (.5, 4.5){2}; 
\node  [color=purple] at (1.5, 4.5){4}; 
\node [color=cyan]  at (2.5, 4.5){5}; 
\node[color=cyan] at (3.5, 4.5){7}; 
\node[color=cyan] at (4.5, 4.5){11}; 
\node[color=cyan] at (5.5, 4.5){14}; 

\node [color=blue] at (.5, 3.5){3};
\node   [color=blue]  at (1.5, 3.5){6}; 
\node  [color=purple] at (2.5, 3.5){9}; 
\node [color=cyan]at (3.5, 3.5){10}; 
\node[color=cyan] at (4.5, 3.5){13}; 
\node[color=cyan] at (5.5, 3.5){17}; 

\node[color=blue] at (.5, 2.5){5}; 
\node [color=blue] at (1.5, 2.5){8}; 
\node [color=blue]  at (2.5, 2.5){12}; 
\node  [color=purple] at (3.5, 2.5){15}; 
\node[color=cyan] at (4.5, 2.5){16}; 
\node[color=cyan] at (5.5, 2.5){18}; 

\node[color=blue] at (.5, 1.5){7}; 
\node[color=blue] at (1.5, 1.5){11}; 
\node [color=blue] at (2.5, 1.5){14}; 
\node [color=blue] at (3.5, 1.5){17}; 
\node  [color=purple] at (4.5, 1.5){19}; 
\node[color=cyan] at (5.5, 1.5){20}; 

\node[color=blue] at (.5, .5){10}; 
\node[color=blue] at (1.5, .5){13}; 
\node[color=blue] at (2.5, .5){16}; 
\node  [color=blue] at (3.5, .5){18}; 
\node [color=blue] at (4.5, .5){20}; 
\node  [color=purple] at (5.5, .5){21}; 
\end{scope}

\begin{scope}[xshift=18cm]
\foreach \x in {0, 1,2,3,4, 5,6,7} {\draw[thick] (0,\x) -- (7, \x); }
\foreach \x in {0, 1,2,3,4,5,6,7} {\draw[thick] (\x,0) -- ( \x,7); }

\node  [color=purple] at (.5, 6.5){1}; 
\node  [color=cyan]at (1.5, 6.5){2}; 
\node [color=cyan] at (2.5, 6.5){3}; 
\node[color=cyan] at (3.5, 6.5){6}; 
\node[color=cyan] at (4.5, 6.5){8}; 
\node[color=cyan] at (5.5, 6.5){12}; 
\node[color=cyan] at (6.5, 6.5){15}; 

\node  [color=blue] at (.5, 5.5){2}; 
\node   [color=purple] at (1.5, 5.5){4}; 
\node [color=cyan] at (2.5, 5.5){5}; 
\node[color=cyan] at (3.5, 5.5){7}; 
\node[color=cyan] at (4.5, 5.5){11}; 
\node[color=cyan] at (5.5, 5.5){14}; 
\node[color=cyan] at (6.5, 5.5){19};

\node [color=blue] at (.5, 4.5){3}; 
\node [color=blue]  at (1.5, 4.5){6}; 
\node  [color=purple] at (2.5, 4.5){9}; 
\node[color=cyan] at (3.5, 4.5){10}; 
\node[color=cyan] at (4.5, 4.5){13}; 
\node[color=cyan] at (5.5, 4.5){18}; 
\node[color=cyan] at (6.5, 4.5){21}; 

\node [color=blue] at (.5, 3.5){5};
\node   [color=blue]  at (1.5, 3.5){8}; 
\node  [color=blue] at (2.5, 3.5){12}; 
\node  [color=purple] at (3.5, 3.5){16}; 
\node[color=cyan] at (4.5, 3.5){17}; 
\node[color=cyan] at (5.5, 3.5){20}; 
\node[color=cyan] at (6.5, 3.5){24}; 

\node[color=blue] at (.5, 2.5){7}; 
\node [color=blue] at (1.5, 2.5){11}; 
\node [color=blue]  at (2.5, 2.5){15}; 
\node [color=blue] at (3.5, 2.5){19}; 
\node  [color=purple] at (4.5, 2.5){22}; 
\node[color=cyan] at (5.5, 2.5){23}; 
\node[color=cyan] at (6.5, 2.5){25}; 

\node[color=blue] at (.5, 1.5){10}; 
\node[color=blue] at (1.5, 1.5){14}; 
\node [color=blue] at (2.5, 1.5){18}; 
\node [color=blue] at (3.5, 1.5){21}; 
\node [color=blue] at (4.5, 1.5){24}; 
\node  [color=purple] at (5.5, 1.5){26}; 
\node[color=cyan] at (6.5, 1.5){27}; 

\node[color=blue] at (.5, .5){13}; 
\node[color=blue] at (1.5, .5){17}; 
\node[color=blue] at (2.5, .5){20}; 
\node  [color=blue] at (3.5, .5){23}; 
\node [color=blue] at (4.5, .5){25}; 
\node [color=blue] at (5.5, .5){27}; 
\node  [color=purple] at (6.5, .5){28}; 
\end{scope}

\begin{scope}[xshift=26cm]
\foreach \x in {0, 1,2,3,4, 5,6,7,8} {\draw[thick] (0,\x) -- (8, \x); }
\foreach \x in {0, 1,2,3,4,5,6,7,8} {\draw[thick] (\x,0) -- ( \x,8); }

\node  [color=purple] at (.5, 7.5){1}; 
\node  [color=cyan]at (1.5, 7.5){2}; 
\node [color=cyan] at (2.5, 7.5){3}; 
\node[color=cyan] at (3.5, 7.5){6}; 
\node[color=cyan] at (4.5, 7.5){8}; 
\node[color=cyan] at (5.5, 7.5){12}; 
\node[color=cyan] at (6.5, 7.5){15}; 
\node[color=cyan] at (7.5, 7.5){20}; 

\node  [color=blue] at (.5, 6.5){2}; 
\node   [color=purple] at (1.5, 6.5){4}; 
\node [color=cyan] at (2.5, 6.5){5}; 
\node[color=cyan] at (3.5, 6.5){7}; 
\node[color=cyan] at (4.5, 6.5){11}; 
\node[color=cyan] at (5.5, 6.5){14}; 
\node[color=cyan] at (6.5, 6.5){19}; 
\node[color=cyan] at (7.5, 6.5){23}; 

\node [color=blue] at (.5, 5.5){3}; 
\node [color=blue]  at (1.5, 5.5){6}; 
\node  [color=purple] at (2.5, 5.5){9}; 
\node[color=cyan] at (3.5, 5.5){10}; 
\node[color=cyan] at (4.5, 5.5){13}; 
\node[color=cyan] at (5.5, 5.5){18}; 
\node[color=cyan] at (6.5, 5.5){22}; 
\node[color=cyan] at (7.5, 5.5){27}; 

\node [color=blue] at (.5, 4.5){5};
\node   [color=blue]  at (1.5, 4.5){8}; 
\node  [color=blue] at (2.5, 4.5){12}; 
\node  [color=purple]  at (3.5, 4.5){16}; 
\node[color=cyan] at (4.5, 4.5){17}; 
\node[color=cyan] at (5.5, 4.5){21}; 
\node[color=cyan] at (6.5, 4.5){26}; 
\node[color=cyan] at (7.5, 4.5){29}; 

\node[color=blue] at (.5, 3.5){7}; 
\node [color=blue] at (1.5, 3.5){11}; 
\node [color=blue]  at (2.5, 3.5){15}; 
\node [color=blue] at (3.5, 3.5){20}; 
\node  [color=purple] at (4.5, 3.5){24}; 
\node[color=cyan] at (5.5, 3.5){25}; 
\node[color=cyan] at (6.5, 3.5){28}; 
\node[color=cyan] at (7.5, 3.5){32}; 

\node[color=blue] at (.5, 2.5){10}; 
\node[color=blue] at (1.5, 2.5){14}; 
\node [color=blue] at (2.5, 2.5){19}; 
\node [color=blue] at (3.5, 2.5){23}; 
\node [color=blue] at (4.5, 2.5){27}; 
\node  [color=purple] at (5.5, 2.5){30}; 
\node[color=cyan] at (6.5, 2.5){31}; 
\node[color=cyan] at (7.5, 2.5){33}; 

\node[color=blue] at (.5, 1.5){13}; 
\node[color=blue] at (1.5, 1.5){18}; 
\node[color=blue] at (2.5, 1.5){22}; 
\node  [color=blue] at (3.5, 1.5){26}; 
\node [color=blue] at (4.5, 1.5){29}; 
\node [color=blue] at (5.5, 1.5){32}; 
\node  [color=purple]  at (6.5, 1.5){34}; 
\node[color=cyan] at (7.5, 1.5){35}; 

\node[color=blue] at (.5, .5){17}; 
\node[color=blue] at (1.5, .5){21}; 
\node[color=blue] at (2.5, .5){25}; 
\node  [color=blue] at (3.5, .5){28}; 
\node [color=blue] at (4.5, .5){31}; 
\node [color=blue] at (5.5, .5){33}; 
\node [color=blue] at (6.5, .5){35}; 
\node  [color=purple] at (7.5, .5){36}; 
\end{scope}

\end{tikzpicture}
\caption{  Filling of the square when $r=3, 4, 5, 6, 7$ with matching diagonal. }
\label{figsqntc}
\end{figure}

When $\alpha=\beta$, we fill the squares by South East to North West diagonals with the terms in the lower half of each diagonal are matched with the terms in the upper half written in the same order(see Figure \ref{figsqntc}). 
Only the   middle square of these  diagonals have no matching terms.
Matching spots are at distance half the length of the diagonal.
At each stage, the only available spots to fill  are those left in the same diagonal and those in the next diagonal below those that have already been filled.
Matching spots are the only ones at distance half the length of the diagonal and the distance between them is even while the distance  between a spot on one diagonal and a spot on the next one is odd.
So, no additional empty spot could be filled with the same index.
As the process is symmetric (starting at the end and reversing the order of the indices would give the symmetric result), no already filled spot could be replaced with the same index.
So the filling is admissible.

When $\alpha<\beta$, the South East to North West diagonals selected for double filling are separated by a few spots in each column with single filling as shown in Figure \ref{fig:5colanglnum}.
In particular, before filling the last spot in the odd diagonals, we need to fill the column above this spot.
With an argument similar to the one above, this is also an admissible filling.

\begin{figure}[h!]
\begin{tikzpicture}[scale=.55]

\begin{scope}[]
\foreach \x in {0, 1,2,3,4,5} {\draw[thick] (0,\x) -- (5, \x); }
\foreach \x in {0, 1,2,3, 4, 5} {\draw[thick] (\x,0) -- ( \x,5); }
\node  [color=purple] at (.5, 4.5){1}; 
\node[color=cyan] at (1.5, 4.5){2}; 
\node[color=cyan] at (2.5, 4.5){3}; 
\node[color=cyan] at (3.5, 4.5){6}; 
\node[color=cyan] at (4.5, 4.5){8}; 

\node [color=blue]at (.5, 3.5){2}; 
\node[color=purple] at (1.5, 3.5){4}; 
\node[color=cyan] at (2.5, 3.5){5}; 
\node[color=cyan] at (3.5, 3.5){7}; 
\node[color=cyan] at (4.5, 3.5){11}; 

\node [color=blue] at (.5, 2.5){3}; 
\node  [color=blue] at (1.5, 2.5){6}; 
\node[color=purple] at (2.5, 2.5){9}; 
\node[color=cyan] at (3.5, 2.5){10}; 
\node[color=cyan] at (4.5, 2.5){12}; 

\node [color=blue]  at (.5, 1.5){5}; 
\node [color=blue] at (1.5, 1.5){8}; 
\node[color=blue] at (2.5, 1.5){11}; 
\node[color=purple] at (3.5, 1.5){13}; 
\node[color=cyan] at (4.5, 1.5){14}; 

\node[color=blue] at (.5, .5){7}; 
\node[color=blue] at (1.5, .5){10}; 
\node  [color=blue] at (2.5, .5){12}; 
\node[color=blue] at (3.5, .5){14}; 
\node[color=purple] at (4.5,.5){15}; 
\end{scope}

\begin{scope}[xshift=5.5cm]
\foreach \x in {0, 1,2,3,4,5,6} {\draw[thick] (0,\x) -- (5, \x); }
\foreach \x in {0, 1,2,3, 4, 5} {\draw[thick] (\x,0) -- ( \x,6); }

\node  [color=purple] at (.5, 5.5){1}; 
\node[color=cyan] at (1.5, 5.5){3}; 
\node[color=cyan] at (2.5, 5.5){4}; 
\node[color=cyan] at (3.5, 5.5){8}; 
\node[color=cyan] at (4.5, 5.5){10}; 

\node  [color=purple] at (.5, 4.5){2}; 
\node[color=purple] at (1.5, 4.5){5}; 
\node[color=cyan] at (2.5, 4.5){7}; 
\node[color=cyan] at (3.5, 4.5){9}; 
\node[color=cyan] at (4.5, 4.5){14}; 

\node [color=blue]at (.5, 3.5){3}; 
\node[color=purple] at (1.5, 3.5){6}; 
\node[color=purple] at (2.5, 3.5){11}; 
\node[color=cyan] at (3.5, 3.5){13}; 
\node[color=cyan] at (4.5, 3.5){15}; 

\node [color=blue] at (.5, 2.5){4}; 
\node  [color=blue] at (1.5, 2.5){8}; 
\node[color=purple] at (2.5, 2.5){12}; 
\node[color=purple] at (3.5, 2.5){16}; 
\node[color=cyan] at (4.5, 2.5){18}; 

\node [color=blue]  at (.5, 1.5){7}; 
\node [color=blue] at (1.5, 1.5){10}; 
\node[color=blue] at (2.5, 1.5){14}; 
\node[color=purple] at (3.5, 1.5){17}; 
\node[color=purple] at (4.5, 1.5){19}; 

\node[color=blue] at (.5, .5){9}; 
\node[color=blue] at (1.5, .5){13}; 
\node  [color=blue] at (2.5, .5){15}; 
\node[color=blue] at (3.5, .5){18}; 
\node[color=purple] at (4.5,.5){20}; 
\end{scope}

\begin{scope}[xshift=11cm]
\foreach \x in {0, 1,2,3,4,5,6,7} {\draw[thick] (0,\x) -- (5, \x); }
\foreach \x in {0, 1,2,3, 4, 5} {\draw[thick] (\x,0) -- ( \x,7); }
\node  [color=purple] at (.5, 6.5){1}; 
\node[color=cyan] at (1.5, 6.5){4}; 
\node[color=cyan] at (2.5, 6.5){7}; 
\node[color=cyan] at (3.5, 6.5){9}; 
\node[color=cyan] at (4.5, 6.5){13}; 

\node  [color=purple] at (.5, 5.5){2}; 
\node[color=cyan] at (1.5, 5.5){5}; 
\node[color=cyan] at (2.5, 5.5){8}; 
\node[color=cyan] at (3.5, 5.5){11}; 
\node[color=cyan] at (4.5, 5.5){15}; 

\node [color=purple]at (.5, 4.5){3}; 
\node[color=purple] at (1.5, 4.5){6}; 
\node[color=cyan] at (2.5, 4.5){10}; 
\node[color=cyan] at (3.5, 4.5){14}; 
\node[color=cyan] at (4.5, 4.5){18}; 

\node [color=blue] at (.5, 3.5){4}; 
\node  [color=blue] at (1.5, 3.5){7}; 
\node[color=purple] at (2.5, 3.5){12}; 
\node[color=cyan] at (3.5, 3.5){16}; 
\node[color=cyan] at (4.5, 3.5){19}; 

\node [color=blue] at (.5, 2.5){5}; 
\node  [color=blue] at (1.5, 2.5){9}; 
\node[color=blue] at (2.5, 2.5){13}; 
\node[color=purple] at (3.5, 2.5){17}; 
\node[color=purple] at (4.5, 2.5){20}; 

\node [color=blue]  at (.5, 1.5){8}; 
\node [color=blue] at (1.5, 1.5){11}; 
\node[color=blue] at (2.5, 1.5){15}; 
\node[color=blue] at (3.5, 1.5){18}; 
\node[color=purple] at (4.5, 1.5){21}; 

\node[color=blue] at (.5, .5){10}; 
\node[color=blue] at (1.5, .5){14}; 
\node  [color=blue] at (2.5, .5){16}; 
\node[color=blue] at (3.5, .5){19}; 
\node[color=purple] at (4.5,.5){22}; 
\end{scope}

\begin{scope}[xshift=16.5cm]
\foreach \x in {0, 1,2,3,4,5,6,7,8} {\draw[thick] (0,\x) -- (5, \x); }
\foreach \x in {0, 1,2,3, 4, 5} {\draw[thick] (\x,0) -- ( \x,8); }

\node  [color=purple] at (.5, 7.5){1}; 
\node[color=cyan] at (1.5, 7.5){5}; 
\node[color=cyan] at (2.5, 7.5){9}; 
\node[color=cyan] at (3.5, 7.5){11}; 
\node[color=cyan] at (4.5, 7.5){16}; 

\node  [color=purple] at (.5, 6.5){2}; 
\node[color=cyan] at (1.5, 6.5){6}; 
\node[color=cyan] at (2.5, 6.5){10}; 
\node[color=cyan] at (3.5, 6.5){13}; 
\node[color=cyan] at (4.5, 6.5){18}; 

\node [color=purple]at (.5, 5.5){3}; 
\node[color=purple] at (1.5, 5.5){7}; 
\node[color=cyan] at (2.5, 5.5){12}; 
\node[color=cyan] at (3.5, 5.5){17}; 
\node[color=cyan] at (4.5, 5.5){22}; 

\node [color=purple] at (.5, 4.5){4}; 
\node  [color=purple] at (1.5, 4.5){8}; 
\node[color=purple] at (2.5, 4.5){14}; 
\node[color=cyan] at (3.5, 4.5){19}; 
\node[color=cyan] at (4.5, 4.5){23}; 

\node [color=blue] at (.5, 3.5){5}; 
\node  [color=blue] at (1.5, 3.5){9}; 
\node[color=purple] at (2.5, 3.5){15}; 
\node[color=purple] at (3.5, 3.5){20}; 
\node[color=purple] at (4.5, 3.5){24}; 

\node [color=blue]  at (.5, 2.5){6}; 
\node [color=blue] at (1.5, 2.5){11}; 
\node[color=blue] at (2.5, 2.5){16}; 
\node[color=purple] at (3.5, 2.5){21}; 
\node[color=purple] at (4.5, 2.5){25}; 

\node [color=blue]  at (.5, 1.5){10}; 
\node [color=blue] at (1.5, 1.5){13}; 
\node[color=blue] at (2.5, 1.5){18}; 
\node[color=blue] at (3.5, 1.5){22}; 
\node[color=purple] at (4.5, 1.5){26}; 

\node[color=blue] at (.5, .5){12}; 
\node[color=blue] at (1.5, .5){17}; 
\node  [color=blue] at (2.5, .5){19}; 
\node[color=blue] at (3.5, .5){23}; 
\node[color=purple] at (4.5,.5){27}; 
\end{scope}

\begin{scope}[xshift=22cm]
\foreach \x in {0, 1,2,3,4,5,6,7,8,9} {\draw[thick] (0,\x) -- (5, \x); }
\foreach \x in {0, 1,2,3, 4, 5} {\draw[thick] (\x,0) -- ( \x,9); }
\node  [color=purple] at (.5, 8.5){1}; 
\node[color=cyan] at (1.5, 8.5){6}; 
\node[color=cyan] at (2.5, 8.5){11}; 
\node[color=cyan] at (3.5, 8.5){13}; 
\node[color=cyan] at (4.5, 8.5){19}; 

\node  [color=purple] at (.5, 7.5){2}; 
\node[color=cyan] at (1.5, 7.5){7}; 
\node[color=cyan] at (2.5, 7.5){12}; 
\node[color=cyan] at (3.5, 7.5){16}; 
\node[color=cyan] at (4.5, 7.5){21}; 

\node [color=purple]at (.5, 6.5){3}; 
\node[color=purple] at (1.5, 6.5){8}; 
\node[color=cyan] at (2.5, 6.5){14}; 
\node[color=cyan] at (3.5, 6.5){20}; 
\node[color=cyan] at (4.5, 6.5){26}; 

\node [color=purple] at (.5, 5.5){4}; 
\node  [color=purple] at (1.5, 5.5){9}; 
\node[color=purple] at (2.5, 5.5){16}; 
\node[color=cyan] at (3.5, 5.5){22}; 
\node[color=cyan] at (4.5, 5.5){27}; 

\node [color=purple] at (.5, 4.5){5}; 
\node  [color=purple] at (1.5, 4.5){10}; 
\node[color=purple] at (2.5, 4.5){17}; 
\node[color=purple] at (3.5, 4.5){23}; 
\node[color=purple] at (4.5, 4.5){28}; 

\node [color=blue] at (.5, 3.5){6}; 
\node  [color=blue] at (1.5, 3.5){11}; 
\node[color=purple] at (2.5, 3.5){18}; 
\node[color=purple] at (3.5, 3.5){24}; 
\node[color=purple] at (4.5, 3.5){29}; 

\node [color=blue]  at (.5, 2.5){7}; 
\node [color=blue] at (1.5, 2.5){13}; 
\node[color=blue] at (2.5, 2.5){19}; 
\node[color=purple] at (3.5, 2.5){25}; 
\node[color=purple] at (4.5, 2.5){30}; 

\node [color=blue]  at (.5, 1.5){12}; 
\node [color=blue] at (1.5, 1.5){15}; 
\node[color=blue] at (2.5, 1.5){21}; 
\node[color=blue] at (3.5, 1.5){26}; 
\node[color=purple] at (4.5, 1.5){31}; 

\node[color=blue] at (.5, .5){14}; 
\node[color=blue] at (1.5, .5){20}; 
\node  [color=blue] at (2.5, .5){22}; 
\node[color=blue] at (3.5, .5){27}; 
\node[color=purple] at (4.5,.5){32}; 
\end{scope}

\begin{scope}[xshift=27.5cm]
\foreach \x in {0, 1,2,3,4,5,6,7,8,9, 10} {\draw[thick] (0,\x) -- (5, \x); }
\foreach \x in {0, 1,2,3, 4, 5} {\draw[thick] (\x,0) -- ( \x,10); }
\node  [color=purple] at (.5, 9.5){1}; 
\node[color=cyan] at (1.5, 9.5){7}; 
\node[color=cyan] at (2.5, 9.5){13}; 
\node[color=cyan] at (3.5, 9.5){15}; 
\node[color=cyan] at (4.5, 9.5){22}; 

\node  [color=purple] at (.5, 8.5){2}; 
\node[color=cyan] at (1.5, 8.5){8}; 
\node[color=cyan] at (2.5, 8.5){14}; 
\node[color=cyan] at (3.5, 8.5){17}; 
\node[color=cyan] at (4.5, 8.5){24}; 

\node [color=purple]at (.5, 7.5){3}; 
\node[color=purple] at (1.5, 7.5){9}; 
\node[color=cyan] at (2.5, 7.5){16}; 
\node[color=cyan] at (3.5, 7.5){23}; 
\node[color=cyan] at (4.5, 7.5){30}; 

\node [color=purple]at (.5, 6.5){4}; 
\node[color=purple] at (1.5, 6.5){10}; 
\node[color=purple] at (2.5, 6.5){18}; 
\node[color=cyan] at (3.5, 6.5){25}; 
\node[color=cyan] at (4.5, 6.5){31}; 

\node [color=purple] at (.5, 5.5){5}; 
\node  [color=purple] at (1.5, 5.5){11}; 
\node[color=purple] at (2.5, 5.5){19}; 
\node[color=purple] at (3.5, 5.5){26}; 
\node[color=purple] at (4.5, 5.5){32}; 

\node [color=purple] at (.5, 4.5){6}; 
\node  [color=purple] at (1.5, 4.5){12}; 
\node[color=purple] at (2.5, 4.5){20}; 
\node[color=purple] at (3.5, 4.5){27}; 
\node[color=purple] at (4.5, 4.5){33}; 

\node [color=blue] at (.5, 3.5){7}; 
\node  [color=blue] at (1.5, 3.5){13}; 
\node[color=purple] at (2.5, 3.5){21}; 
\node[color=purple] at (3.5, 3.5){28}; 
\node[color=purple] at (4.5, 3.5){34}; 

\node [color=blue]  at (.5, 2.5){8}; 
\node [color=blue] at (1.5, 2.5){15}; 
\node[color=blue] at (2.5, 2.5){22}; 
\node[color=purple] at (3.5, 2.5){29}; 
\node[color=purple] at (4.5, 2.5){35}; 

\node [color=blue]  at (.5, 1.5){14}; 
\node [color=blue] at (1.5, 1.5){17}; 
\node[color=blue] at (2.5, 1.5){24}; 
\node[color=blue] at (3.5, 1.5){30}; 
\node[color=purple] at (4.5, 1.5){36}; 

\node[color=blue] at (.5, .5){16}; 
\node[color=blue] at (1.5, .5){23}; 
\node  [color=blue] at (2.5, .5){25}; 
\node[color=blue] at (3.5, .5){31}; 
\node[color=purple] at (4.5,.5){37}; 
\end{scope}

\end{tikzpicture}
\caption{ Filling a rectangle with 5 columns with matching corners.}
\label{fig:5colanglnum}
\end{figure}

\end{proof}

\begin{lemma}\label{lemfilex}  Let   $\alpha, \beta, g$ be integers such that  $\alpha\le \beta$, and $ \frac{\alpha(\beta+1)}2 \le g\le \alpha\beta$.
There exists  an admissible  filling of the $\alpha\times  \beta$ rectangle with the indices $1,\dots, g$ where $\alpha\beta-g$  of the indices appear exactly twice 
and the remainder appear exactly once.
\end{lemma}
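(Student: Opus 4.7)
The plan is to reduce to finding $e := \alpha\beta - g$ disjoint pairs of incomparable cells (antichain pairs) in the $\alpha\times\beta$ rectangle, where two cells are incomparable if neither componentwise dominates the other. Given such pairs, I label cells by a linear extension of the quotient poset in which each chosen pair is collapsed to a single class: each class gets one label, assigned in increasing order along the extension. Admissibility of the resulting filling follows automatically, since comparable cells end up in distinct classes and thus receive distinct labels.

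To produce the antichain pairs I enumerate cells in antidiagonal order, $P_1=(1,1),\ P_2=(1,2),\ P_3=(2,1),\ P_4=(1,3),\ldots,\ P_{\alpha\beta}=(\beta,\alpha)$, reading cells within each antidiagonal top-to-bottom. This enumeration is a linear extension of the componentwise partial order, and within any single antidiagonal consecutive cells in it are incomparable. For $\alpha\ge 3$, the transition from the last cell of one antidiagonal to the first of the next is also incomparable, with the sole exceptions of the first step $(P_1,P_2)$ and the last step $(P_{\alpha\beta-1},P_{\alpha\beta})$, which share a row. So the ``mergeable'' consecutive pairs form a path on the $\alpha\beta-2$ cells $P_2,\ldots,P_{\alpha\beta-1}$ with $\alpha\beta-3$ edges, and a path on $m$ vertices admits a matching of any size up to $\lfloor m/2\rfloor=\lfloor(\alpha\beta-2)/2\rfloor$ --- exactly the maximum $e$ permitted by the hypothesis $g\ge \alpha\beta/2+1$. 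I pick any matching of size $e$ among these mergeable pairs.

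The labeling is then $\tau(P_1)=1$, and for $i\ge 2$ set $\tau(P_i)=\tau(P_{i-1})$ if $(P_{i-1},P_i)$ is a chosen mergeable pair, else $\tau(P_i)=\tau(P_{i-1})+1$. Because the chosen pairs form a matching (no two share a cell), each merged label appears exactly twice; the total count of distinct labels is $\alpha\beta-e=g$, so $\tau$ takes values $1,\ldots,g$ with $e$ of them appearing twice and the rest once. Admissibility is the only thing left to verify: if $X$ lies strictly below-left of $Y$ then $X$ precedes $Y$ in the antidiagonal order, giving $\tau(X)\le\tau(Y)$; equality would force $X$ and $Y$ to share a merged pair, which is an antichain, contradicting the comparability of $X$ and $Y$.

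The main obstacle I expect is the degenerate case $\alpha=2$, where between-antidiagonal transitions both lie in a common row and fail to be incomparable. For this case I observe that the within-antidiagonal pairs $(P_{2k},P_{2k+1})$ for $k=1,\ldots,\beta-1$ are already $\beta-1=\alpha\beta/2-1$ pairwise-disjoint incomparable pairs, so any $e$ of them may be selected and the same labeling recipe above works verbatim.
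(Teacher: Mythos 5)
Your proof is correct, and it reaches the stated existence result by a genuinely different and considerably shorter route than the paper. The paper builds an explicit filling in which the $e=\alpha\beta-g$ repeated indices occupy two staircase-shaped regions hugging the bottom-left and top-right corners of the rectangle (extending the construction of Lemma \ref{lemseptor}); this requires the bookkeeping with $t_0,t,l,\epsilon_i,a_i,b_i$ and a column-by-column induction. You instead merge $e$ disjoint pairs of antidiagonally adjacent cells along a linear extension, and the matching argument makes admissibility essentially automatic: the antidiagonal enumeration is a linear extension of the componentwise order, consecutive cells are incomparable except at the first and last steps when $\alpha\ge 3$, the resulting path carries a matching of every size up to $\lfloor\alpha\beta/2\rfloor-1$, which is exactly the bound on $e$ imposed by $g\ge\frac{\alpha\beta}{2}+1$, and the matching property prevents two comparable cells from receiving equal labels. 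The separate treatment of $\alpha=2$ is also correct. (Both your argument and the paper's implicitly assume $\alpha\ge 2$; for $\alpha=1$ and $\beta\ge 4$ the statement is in fact false, but in the application $\alpha=r+1\ge 2$.) The trade-off is structural: your paired cells sit at grid distance $2$, so under the correspondence of Proposition \ref{prop:compesp} they only ever impose $2$-torsion conditions, whereas the paper's staircase construction is engineered so that the repeated indices fill the last $a_i$ spots of each column, a feature that the proof of Theorem \ref{ThExComp} invokes explicitly when selecting the products $s_it_j$ for the Petri-map argument. Your construction therefore proves the lemma as stated, but substituting it for the paper's would force a rewrite of that later step, since the choice of which product $s_it_j$ to discard for each repeated index would have to track your antidiagonal pairs rather than the staircases.
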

\begin{proof} We construct admissible fillings corresponding to the smallest values of g so that at most $\alpha+1$ indices appear only once and the rest appear exactly twice.
For larger values of $g$, it suffices to replace the two spots where an index was used twice by consecutive indices and increase the remaining indices by one. 
One can repeat the process as necessary up to using $\alpha \beta=g$ indices when all the entries in the filling are different.


\begin{figure}[h!]
\begin{tikzpicture}[scale=.5]

\begin{scope}[]
\foreach \x in {0, 1,2,3,4} {\draw[thick] (0,\x) -- (4, \x); }
\foreach \x in {0, 1,2,3,4} {\draw[thick] (\x,0) -- ( \x,4); }

\node  [color=purple] at (.5, 3.5){1}; 
\node[color=cyan] at (1.5, 3.5){2}; 
\node[color=cyan] at (2.5, 3.5){3}; 
\node[color=cyan] at (3.5, 3.5){6}; 

\node [color=blue] at (.5, 2.5){2}; 
\node  [color=purple] at (1.5, 2.5){4}; 
\node[color=cyan] at (2.5, 2.5){5}; 
\node[color=cyan] at (3.5, 2.5){7}; 

\node [color=blue]  at (.5, 1.5){3}; 
\node [color=blue]  at (1.5, 1.5){6}; 
\node [color=purple] at (2.5, 1.5){8}; 
\node[color=cyan] at (3.5, 1.5){9}; 

\node[color=blue] at (.5, .5){5}; 
\node[color=blue] at (1.5, .5){7}; 
\node [color=blue]  at (2.5, .5){9}; 
\node[color=purple] at (3.5, .5){10}; 
\end{scope}

\begin{scope}[xshift=4.5cm]
\foreach \x in {0, 1,2,3,4,5} {\draw[thick] (0,\x) -- (4, \x); }
\foreach \x in {0, 1,2,3,4} {\draw[thick] (\x,0) -- ( \x,5); }

\node  [color=purple] at (.5, 4.5){1}; 
\node[color=cyan] at (1.5, 4.5){2}; 
\node[color=cyan] at (2.5, 4.5){3}; 
\node[color=cyan] at (3.5, 4.5){6};

\node [color=blue]at (.5, 3.5){2}; 
\node[color=purple] at (1.5, 3.5){4}; 
\node[color=cyan] at (2.5, 3.5){5}; 
\node[color=cyan] at (3.5, 3.5){8};

\node [color=blue] at (.5, 2.5){3}; 
\node  [color=blue] at (1.5, 2.5){6}; 
\node[color=cyan] at (2.5, 2.5){7}; 
\node[color=cyan] at (3.5, 2.5){9};

\node [color=blue]  at (.5, 1.5){5}; 
\node [color=blue] at (1.5, 1.5){8}; 
\node[color=purple] at (2.5, 1.5){10}; 
\node[color=cyan] at (3.5, 1.5){11};

\node[color=blue] at (.5, .5){7}; 
\node[color=blue] at (1.5, .5){9}; 
\node  [color=blue] at (2.5, .5){11}; 
\node[color=purple] at (3.5, .5){12};
\end{scope}

\begin{scope}[xshift=9cm]
\foreach \x in {0, 1,2,3,4,5,6} {\draw[thick] (0,\x) -- (4, \x); }
\foreach \x in {0, 1,2,3,4} {\draw[thick] (\x,0) -- ( \x,6); }

\node  [color=purple] at (.5, 5.5){1}; 
\node[color=cyan] at (1.5, 5.5){2}; 
\node[color=cyan] at (2.5, 5.5){3}; 
\node[color=cyan] at (3.5, 5.5){6};

\node [color=blue]at (.5, 4.5){2}; 
\node[color=purple] at (1.5, 4.5){4}; 
\node[color=cyan] at (2.5, 4.5){5}; 
\node[color=cyan] at (3.5, 4.5){8};

\node [color=blue] at (.5, 3.5){3}; 
\node  [color=blue] at (1.5, 3.5){6}; 
\node[color=cyan] at (2.5, 3.5){7}; 
\node[color=cyan] at (3.5, 3.5){10};

\node [color=blue]  at (.5, 2.5){5}; 
\node [color=blue] at (1.5, 2.5){8}; 
\node[color=cyan] at (2.5, 2.5){9}; 
\node[color=cyan] at (3.5, 2.5){11};

\node[color=blue] at (.5,1 .5){7}; 
\node[color=blue] at (1.5, 1.5){10}; 
\node  [color=purple] at (2.5, 1.5){12}; 
\node[color=blue] at (3.5, 1.5){13};

\node[color=blue] at (.5, .5){9}; 
\node[color=blue] at (1.5, .5){11}; 
\node  [color=blue] at (2.5, .5){13}; 
\node[color=purple] at (3.5, .5){14};
\end{scope}

\begin{scope}[xshift=13.5cm]
\foreach \x in {0, 1,2,3,4,5,6,7} {\draw[thick] (0,\x) -- (4, \x); }
\foreach \x in {0, 1,2,3,4} {\draw[thick] (\x,0) -- ( \x,7); }

\node  [color=purple] at (.5, 6.5){1}; 
\node[color=cyan] at (1.5, 6.5){2}; 
\node[color=cyan] at (2.5, 6.5){3}; 
\node[color=cyan] at (3.5, 6.5){6};

\node [color=blue]at (.5, 5.5){2}; 
\node[color=purple] at (1.5, 5.5){4}; 
\node[color=cyan] at (2.5, 5.5){5}; 
\node[color=cyan] at (3.5, 5.5){8};

\node [color=blue] at (.5, 4.5){3}; 
\node  [color=blue] at (1.5, 4.5){6}; 
\node[color=cyan] at (2.5, 4.5){7}; 
\node[color=cyan] at (3.5, 4.5){10};

\node [color=blue]  at (.5, 3.5){5}; 
\node [color=blue] at (1.5, 3.5){8}; 
\node[color=cyan] at (2.5, 3.5){9}; 
\node[color=cyan] at (3.5, 3.5){12};

\node[color=blue] at (.5,2 .5){7}; 
\node[color=blue] at (1.5, 2.5){10}; 
\node  [color=cyan] at (2.5, 2.5){11}; 
\node[color=cyan] at (3.5, 2.5){13};

\node[color=blue] at (.5,1 .5){9}; 
\node[color=blue] at (1.5, 1.5){12}; 
\node  [color=purple] at (2.5, 1.5){14}; 
\node[color=cyan] at (3.5, 1.5){15};

\node[color=blue] at (.5, .5){11}; 
\node[color=blue] at (1.5, .5){13}; 
\node  [color=blue] at (2.5, .5){15}; 
\node[color=purple] at (3.5, .5){16};
\end{scope}

\begin{scope}[xshift=18cm]
\foreach \x in {0, 1,2,3,4,5,6,7,8} {\draw[thick] (0,\x) -- (4, \x); }
\foreach \x in {0, 1,2,3,4} {\draw[thick] (\x,0) -- ( \x,8); }

\node  [color=purple] at (.5, 7.5){1}; 
\node[color=cyan] at (1.5, 7.5){2}; 
\node[color=cyan] at (2.5, 7.5){3}; 
\node[color=cyan] at (3.5, 7.5){6};

\node [color=blue]at (.5, 6.5){2}; 
\node[color=purple] at (1.5, 6.5){4}; 
\node[color=cyan] at (2.5, 6.5){5}; 
\node[color=cyan] at (3.5, 6.5){8};

\node [color=blue] at (.5, 5.5){3}; 
\node  [color=blue] at (1.5, 5.5){6}; 
\node[color=cyan] at (2.5, 5.5){7}; 
\node[color=cyan] at (3.5, 5.5){10};

\node [color=blue]  at (.5, 4.5){5}; 
\node [color=blue] at (1.5, 4.5){8}; 
\node[color=cyan] at (2.5, 4.5){9}; 
\node[color=cyan] at (3.5, 4.5){12};

\node[color=blue] at (.5,3 .5){7}; 
\node[color=blue] at (1.5, 3.5){10}; 
\node  [color=cyan] at (2.5, 3.5){11}; 
\node[color=cyan] at (3.5, 3.5){14};

\node[color=blue] at (.5,2 .5){9}; 
\node[color=blue] at (1.5, 2.5){12}; 
\node  [color=cyan] at (2.5, 2.5){13}; 
\node[color=cyan] at (3.5, 2.5){15};

\node[color=blue] at (.5,1 .5){11}; 
\node[color=blue] at (1.5, 1.5){14}; 
\node  [color=purple] at (2.5, 1.5){16}; 
\node[color=cyan] at (3.5, 1.5){17};

\node[color=blue] at (.5, .5){13}; 
\node[color=blue] at (1.5, .5){15}; 
\node  [color=blue] at (2.5, .5){17}; 
\node[color=purple] at (3.5, .5){18};
\end{scope}

\begin{scope}[xshift=22.5cm]
\foreach \x in {0, 1,2,3,4,5,6,7,8,9} {\draw[thick] (0,\x) -- (4, \x); }
\foreach \x in {0, 1,2,3,4} {\draw[thick] (\x,0) -- ( \x,9); }

\node  [color=purple] at (.5, 8.5){1}; 
\node[color=cyan] at (1.5, 8.5){2}; 
\node[color=cyan] at (2.5, 8.5){3}; 
\node[color=cyan] at (3.5, 8.5){6};

\node [color=blue]at (.5, 7.5){2}; 
\node[color=purple] at (1.5, 7.5){4}; 
\node[color=cyan] at (2.5, 7.5){5}; 
\node[color=cyan] at (3.5, 7.5){8};

\node [color=blue] at (.5, 6.5){3}; 
\node  [color=blue] at (1.5, 6.5){6}; 
\node[color=cyan] at (2.5, 6.5){7}; 
\node[color=cyan] at (3.5, 6.5){10};

\node [color=blue]  at (.5, 5.5){5}; 
\node [color=blue] at (1.5, 5.5){8}; 
\node[color=cyan] at (2.5, 5.5){9}; 
\node[color=cyan] at (3.5, 5.5){12};

\node[color=blue] at (.5,4 .5){7}; 
\node[color=blue] at (1.5, 4.5){10}; 
\node  [color=cyan] at (2.5, 4.5){11}; 
\node[color=cyan] at (3.5, 4.5){14};

\node[color=blue] at (.5,3.5){9}; 
\node[color=blue] at (1.5, 3.5){12}; 
\node  [color=cyan] at (2.5, 3.5){13}; 
\node[color=cyan] at (3.5, 3.5){16};

\node[color=blue] at (.5,2 .5){11}; 
\node[color=blue] at (1.5, 2.5){14}; 
\node  [color=cyan] at (2.5, 2.5){15}; 
\node[color=cyan] at (3.5, 2.5){17};

\node[color=blue] at (.5,1 .5){13}; 
\node[color=blue] at (1.5, 1.5){16}; 
\node  [color=purple] at (2.5, 1.5){18}; 
\node[color=cyan] at (3.5, 1.5){19};

\node[color=blue] at (.5, .5){15}; 
\node[color=blue] at (1.5, .5){17}; 
\node  [color=blue] at (2.5, .5){19}; 
\node[color=purple] at (3.5, .5){20};
\end{scope}

\end{tikzpicture}
\caption{Admissible fillings when $\alpha=4$. .}
\label{fig:alpha=4}
\end{figure}
When $\alpha$ is even, we fill by diagonals (see Figure \ref{fig:alpha=4}  for $\alpha=4$). On the middle spot of the odd diagonals at the start and end, we use non-repeated indices,
so  that there are in total $\alpha$ non-repeated indices.

When $\alpha$ is odd, we fill the first column of the rectangle until there are at most $\alpha -1$ spots left in this column. We match the  elements in the first column with elements on the top rows.
Then we fill diagonally as in the even case. 
We use any spots left over in the first column to match with the center of the diagonals of odd length.
When filling the diagonal that starts on the last filled spot of the first column, in order to get an admissible filling, we may need to use an additional non-matching index. 
Then we use non-matching indices for the center spots of the decreasing diagonals at the end (see figures \ref{fig:3col}, \ref{fig:5col} for $\alpha=3, 5$).

\begin{figure}[h!]
\begin{tikzpicture}[scale=.55]

\begin{scope}
\foreach \x in {0, 1,2,3} {\draw[thick] (0,\x) -- (3, \x); }
\foreach \x in {0, 1,2,3} {\draw[thick] (\x,0) -- ( \x,3); }

\node  [color=purple] at (.5, 2.5){1}; 
\node[color=cyan] at (1.5, 2.5){2}; 
\node[color=cyan] at (2.5, 2.5){3}; 

\node [color=blue]  at (.5, 1.5){2}; 
\node  [color=purple] at (1.5, 1.5){4}; 
\node[color=cyan] at (2.5, 1.5){5}; 

\node[color=blue] at (.5, .5){3}; 
\node[color=blue] at (1.5, .5){5}; 
\node  [color=purple] at (2.5, .5){6}; 
\end{scope}

\begin{scope}[xshift=3.5cm]
\foreach \x in {0, 1,2,3,4} {\draw[thick] (0,\x) -- (3, \x); }
\foreach \x in {0, 1,2,3} {\draw[thick] (\x,0) -- ( \x,4); }

\node  [color=purple] at (.5, 3.5){1}; 
\node[color=cyan] at (1.5, 3.5){2}; 
\node[color=cyan] at (2.5, 3.5){3}; 

\node [color=blue] at (.5, 2.5){2}; 
\node  [color=cyan] at (1.5, 2.5){4}; 
\node[color=purple] at (2.5, 2.5){6}; 

\node [color=blue]  at (.5, 1.5){3}; 
\node [color=purple]  at (1.5, 1.5){5}; 
\node[color=cyan] at (2.5, 1.5){7}; 

\node[color=blue] at (.5, .5){4}; 
\node[color=blue] at (1.5, .5){7}; 
\node [color=purple]  at (2.5, .5){8}; 
\end{scope}

\begin{scope}[xshift=7cm]
\foreach \x in {0, 1,2,3,4,5} {\draw[thick] (0,\x) -- (3, \x); }
\foreach \x in {0, 1,2,3} {\draw[thick] (\x,0) -- ( \x,5); }

\node  [color=purple] at (.5, 4.5){1}; 
\node[color=cyan] at (1.5, 4.5){2}; 
\node[color=cyan] at (2.5, 4.5){3}; 

\node [color=blue]at (.5, 3.5){2}; 
\node[color=cyan] at (1.5, 3.5){4}; 
\node[color=cyan] at (2.5, 3.5){6}; 

\node [color=blue] at (.5, 2.5){3}; 
\node  [color=purple] at (1.5, 2.5){5}; 
\node[color=cyan] at (2.5, 2.5){7}; 

\node [color=blue]  at (.5, 1.5){4}; 
\node [color=blue] at (1.5, 1.5){6}; 
\node[color=cyan] at (2.5, 1.5){8}; 

\node[color=blue] at (.5, .5){7}; 
\node[color=blue] at (1.5, .5){8}; 
\node  [color=purple] at (2.5, .5){9}; 
\end{scope}

\begin{scope}[xshift=10.5cm]
\foreach \x in {0, 1,2,3,4,5,6} {\draw[thick] (0,\x) -- (3, \x); }
\foreach \x in {0, 1,2,3} {\draw[thick] (\x,0) -- ( \x,6); }

\node  [color=purple] at (.5, 5.5){1}; 
\node[color=cyan] at (1.5, 5.5){2}; 
\node[color=cyan] at (2.5, 5.5){3}; 

\node  [color=blue] at (.5, 4.5){2}; 
\node[color=cyan] at (1.5, 4.5){4}; 
\node[color=cyan] at (2.5, 4.5){5}; 

\node [color=blue] at (.5, 3.5){3}; 
\node  [color=blue] at (1.5, 3.5){6}; 
\node[color=cyan] at (2.5, 3.5){7}; 

\node [color=blue] at (.5, 2.5){4}; 
\node  [color=blue] at (1.5, 2.5){7}; 
\node[color=purple] at (2.5, 2.5){9}; 

\node [color=blue]  at (.5, 1.5){5}; 
\node [color=purple] at (1.5, 1.5){8}; 
\node[color=cyan] at (2.5, 1.5){10}; 

\node[color=blue] at (.5, .5){6}; 
\node[color=blue] at (1.5, .5){10}; 
\node  [color=purple] at (2.5, .5){11}; 
\end{scope}

\begin{scope}[xshift=14cm]
\foreach \x in {0, 1,2,3,4,5,6,7} {\draw[thick] (0,\x) -- (3, \x); }
\foreach \x in {0, 1,2,3} {\draw[thick] (\x,0) -- ( \x,7); }

\node  [color=purple] at (.5, 6.5){1}; 
\node[color=cyan] at (1.5, 6.5){2}; 
\node[color=cyan] at (2.5, 6.5){3}; 

\node  [color=blue] at (.5, 5.5){2}; 
\node[color=cyan] at (1.5, 5.5){4}; 
\node[color=cyan] at (2.5, 5.5){5}; 

\node  [color=blue] at (.5, 4.5){3}; 
\node[color=cyan] at (1.5, 4.5){6}; 
\node[color=cyan] at (2.5, 4.5){7}; 

\node [color=blue] at (.5, 3.5){4}; 
\node  [color=blue] at (1.5, 3.5){7}; 
\node[color=cyan] at (2.5, 3.5){9}; 

\node [color=blue] at (.5, 2.5){5}; 
\node  [color=purple] at (1.5, 2.5){8}; 
\node[color=cyan] at (2.5, 2.5){10}; 

\node [color=blue]  at (.5, 1.5){6}; 
\node [color=blue] at (1.5, 1.5){9}; 
\node[color=cyan] at (2.5, 1.5){11}; 

\node[color=blue] at (.5, .5){10}; 
\node[color=blue] at (1.5, .5){11}; 
\node  [color=purple] at (2.5, .5){12}; 
\end{scope}

\begin{scope}[xshift=17.5cm]
\foreach \x in {0, 1,2,3,4,5,6,7,8} {\draw[thick] (0,\x) -- (3, \x); }
\foreach \x in {0, 1,2,3} {\draw[thick] (\x,0) -- ( \x,8); }

\node  [color=purple] at (.5, 7.5){1}; 
\node[color=cyan] at (1.5, 7.5){2}; 
\node[color=cyan] at (2.5, 7.5){3}; 

\node [color=blue] at (.5, 6.5){2}; 
\node[color=cyan] at (1.5, 6.5){4}; 
\node[color=cyan] at (2.5, 6.5){5}; 

\node  [color=blue] at (.5, 5.5){3}; 
\node[color=cyan] at (1.5, 5.5){6}; 
\node[color=cyan] at (2.5, 5.5){7}; 

\node  [color=blue] at (.5, 4.5){4}; 
\node[color=cyan] at (1.5, 4.5){8}; 
\node[color=cyan] at (2.5, 4.5){9}; 

\node [color=blue] at (.5, 3.5){5}; 
\node  [color=blue]  at (1.5, 3.5){9}; 
\node[color=cyan] at (2.5, 3.5){10}; 

\node [color=blue] at (.5, 2.5){6}; 
\node  [color=blue] at (1.5, 2.5){10}; 
\node[color=purple] at (2.5, 2.5){12}; 

\node [color=blue]  at (.5, 1.5){7}; 
\node [color=purple] at (1.5, 1.5){11}; 
\node[color=cyan] at (2.5, 1.5){13}; 

\node[color=blue] at (.5, .5){8}; 
\node[color=blue] at (1.5, .5){13}; 
\node  [color=purple] at (2.5, .5){14}; 
\end{scope}

\begin{scope}[xshift=21cm]
\foreach \x in {0, 1,2,3,4,5,6,7,8,9} {\draw[thick] (0,\x) -- (3, \x); }
\foreach \x in {0, 1,2,3} {\draw[thick] (\x,0) -- ( \x,9); }
\node  [color=purple] at (.5, 8.5){1}; 
\node[color=cyan] at (1.5, 8.5){2}; 
\node[color=cyan] at (2.5, 8.5){3}; 

\node [color=blue] at (.5, 7.5){2}; 
\node[color=cyan] at (1.5, 7.5){4}; 
\node[color=cyan] at (2.5, 7.5){5}; 

\node [color=blue] at (.5, 6.5){3}; 
\node[color=cyan] at (1.5, 6.5){6}; 
\node[color=cyan] at (2.5, 6.5){7}; 

\node  [color=blue] at (.5, 5.5){4}; 
\node[color=cyan] at (1.5, 5.5){8}; 
\node[color=cyan] at (2.5, 5.5){9}; 

\node  [color=blue] at (.5, 4.5){5}; 
\node[color=cyan] at (1.5, 4.5){9}; 
\node[color=cyan] at (2.5, 4.5){10}; 

\node [color=blue] at (.5, 3.5){6}; 
\node  [color=blue]  at (1.5, 3.5){10}; 
\node[color=cyan] at (2.5, 3.5){12}; 

\node [color=blue] at (.5, 2.5){7}; 
\node  [color=purple] at (1.5, 2.5){11}; 
\node[color=cyan] at (2.5, 2.5){13}; 

\node [color=blue]  at (.5, 1.5){8}; 
\node [color=blue] at (1.5, 1.5){12}; 
\node[color=cyan] at (2.5, 1.5){14}; 

\node[color=blue] at (.5, .5){13}; 
\node[color=blue] at (1.5, .5){14}; 
\node  [color=purple] at (2.5, .5){15}; 
\end{scope}

\end{tikzpicture}
\caption{ Admissible fillings  for $\alpha=3$.}
\label{fig:3col}
\end{figure}

\begin{figure}[h!]
\begin{tikzpicture}[scale=.55]

\begin{scope}[]
\foreach \x in {0, 1,2,3,4,5} {\draw[thick] (0,\x) -- (5, \x); }
\foreach \x in {0, 1,2,3, 4, 5} {\draw[thick] (\x,0) -- ( \x,5); }
\node  [color=purple] at (.5, 4.5){1}; 
\node[color=cyan] at (1.5, 4.5){2}; 
\node[color=cyan] at (2.5, 4.5){3}; 
\node[color=cyan] at (3.5, 4.5){4}; 
\node[color=cyan] at (4.5, 4.5){5}; 

\node [color=blue]at (.5, 3.5){2}; 
\node[color=purple] at (1.5, 3.5){6}; 
\node[color=cyan] at (2.5, 3.5){7}; 
\node[color=cyan] at (3.5, 3.5){8}; 
\node[color=cyan] at (4.5, 3.5){11}; 

\node [color=blue] at (.5, 2.5){3}; 
\node  [color=blue] at (1.5, 2.5){7}; 
\node[color=purple] at (2.5, 2.5){9}; 
\node[color=cyan] at (3.5, 2.5){10}; 
\node[color=cyan] at (4.5, 2.5){12}; 

\node [color=blue]  at (.5, 1.5){4}; 
\node [color=blue] at (1.5, 1.5){8}; 
\node[color=blue] at (2.5, 1.5){11}; 
\node[color=purple] at (3.5, 1.5){13}; 
\node[color=cyan] at (4.5, 1.5){14}; 

\node[color=blue] at (.5, .5){5}; 
\node[color=blue] at (1.5, .5){10}; 
\node  [color=blue] at (2.5, .5){12}; 
\node[color=blue] at (3.5, .5){14}; 
\node[color=purple] at (4.5,.5){15}; 
\end{scope}

\begin{scope}[xshift=5.5cm]
\foreach \x in {0, 1,2,3,4,5,6} {\draw[thick] (0,\x) -- (5, \x); }
\foreach \x in {0, 1,2,3, 4, 5} {\draw[thick] (\x,0) -- ( \x,6); }

\node  [color=purple] at (.5, 5.5){1}; 
\node[color=cyan] at (1.5, 5.5){2}; 
\node[color=cyan] at (2.5, 5.5){3}; 
\node[color=cyan] at (3.5, 5.5){4}; 
\node[color=cyan] at (4.5, 5.5){5}; 

\node  [color=blue] at (.5, 4.5){2}; 
\node[color=cyan] at (1.5, 4.5){6}; 
\node[color=cyan] at (2.5, 4.5){7}; 
\node[color=cyan] at (3.5, 4.5){8}; 
\node[color=purple] at (4.5, 4.5){12}; 

\node [color=blue]at (.5, 3.5){3}; 
\node[color=blue] at (1.5, 3.5){7}; 
\node[color=purple] at (2.5, 3.5){9}; 
\node[color=cyan] at (3.5, 3.5){10}; 
\node[color=cyan] at (4.5, 3.5){14}; 

\node [color=blue] at (.5, 2.5){4}; 
\node  [color=blue] at (1.5, 2.5){8}; 
\node[color=purple] at (2.5, 2.5){11}; 
\node[color=cyan] at (3.5, 2.5){13}; 
\node[color=cyan] at (4.5, 2.5){15}; 

\node [color=blue]  at (.5, 1.5){5}; 
\node [color=blue] at (1.5, 1.5){10}; 
\node[color=blue] at (2.5, 1.5){14}; 
\node[color=purple] at (3.5, 1.5){16}; 
\node[color=cyan] at (4.5, 1.5){17}; 

\node[color=blue] at (.5, .5){6}; 
\node[color=blue] at (1.5, .5){13}; 
\node  [color=blue] at (2.5, .5){15}; 
\node[color=blue] at (3.5, .5){17}; 
\node[color=purple] at (4.5,.5){18}; 
\end{scope}

\begin{scope}[xshift=11cm]
\foreach \x in {0, 1,2,3,4,5,6,7} {\draw[thick] (0,\x) -- (5, \x); }
\foreach \x in {0, 1,2,3, 4, 5} {\draw[thick] (\x,0) -- ( \x,7); }
\node  [color=purple] at (.5, 6.5){1}; 
\node[color=cyan] at (1.5, 6.5){2}; 
\node[color=cyan] at (2.5, 6.5){3}; 
\node[color=cyan] at (3.5, 6.5){4}; 
\node[color=cyan] at (4.5, 6.5){5}; 

\node  [color=blue] at (.5, 5.5){2}; 
\node[color=cyan] at (1.5, 5.5){6}; 
\node[color=cyan] at (2.5, 5.5){7}; 
\node[color=cyan] at (3.5, 5.5){8}; 
\node[color=cyan] at (4.5, 5.5){11}; 

\node [color=blue]at (.5, 4.5){3}; 
\node[color=blue] at (1.5, 4.5){7}; 
\node[color=cyan] at (2.5, 4.5){9}; 
\node[color=cyan] at (3.5, 4.5){10}; 
\node[color=purple] at (4.5, 4.5){14}; 

\node [color=blue] at (.5, 3.5){4}; 
\node  [color=blue] at (1.5, 3.5){8}; 
\node[color=blue] at (2.5, 3.5){11}; 
\node[color=cyan] at (3.5, 3.5){12}; 
\node[color=cyan] at (4.5, 3.5){16}; 

\node [color=blue] at (.5, 2.5){5}; 
\node  [color=blue] at (1.5, 2.5){10}; 
\node[color=purple] at (2.5, 2.5){13}; 
\node[color=cyan] at (3.5, 2.5){15}; 
\node[color=cyan] at (4.5, 2.5){17}; 

\node [color=blue]  at (.5, 1.5){6}; 
\node [color=blue] at (1.5, 1.5){12}; 
\node[color=blue] at (2.5, 1.5){16}; 
\node[color=purple] at (3.5, 1.5){18}; 
\node[color=cyan] at (4.5, 1.5){19}; 

\node[color=blue] at (.5, .5){9}; 
\node[color=blue] at (1.5, .5){15}; 
\node  [color=blue] at (2.5, .5){17}; 
\node[color=blue] at (3.5, .5){19}; 
\node[color=purple] at (4.5,.5){20}; 
\end{scope}

\begin{scope}[xshift=11cm]
\foreach \x in {0, 1,2,3,4,5,6,7} {\draw[thick] (0,\x) -- (5, \x); }
\foreach \x in {0, 1,2,3, 4, 5} {\draw[thick] (\x,0) -- ( \x,7); }

\node  [color=purple] at (.5, 6.5){1}; 
\node[color=cyan] at (1.5, 6.5){2}; 
\node[color=cyan] at (2.5, 6.5){3}; 
\node[color=cyan] at (3.5, 6.5){4}; 
\node[color=cyan] at (4.5, 6.5){5}; 

\node  [color=blue] at (.5, 5.5){2}; 
\node[color=cyan] at (1.5, 5.5){6}; 
\node[color=cyan] at (2.5, 5.5){7}; 
\node[color=cyan] at (3.5, 5.5){8}; 
\node[color=cyan] at (4.5, 5.5){11}; 

\node [color=blue]at (.5, 4.5){3}; 
\node[color=blue] at (1.5, 4.5){7}; 
\node[color=cyan] at (2.5, 4.5){9}; 
\node[color=cyan] at (3.5, 4.5){10}; 
\node[color=purple] at (4.5, 4.5){14}; 

\node [color=blue] at (.5, 3.5){4}; 
\node  [color=blue] at (1.5, 3.5){8}; 
\node[color=blue] at (2.5, 3.5){11}; 
\node[color=cyan] at (3.5, 3.5){12}; 
\node[color=cyan] at (4.5, 3.5){16}; 

\node [color=blue] at (.5, 2.5){5}; 
\node  [color=blue] at (1.5, 2.5){10}; 
\node[color=purple] at (2.5, 2.5){13}; 
\node[color=cyan] at (3.5, 2.5){15}; 
\node[color=cyan] at (4.5, 2.5){17}; 

\node [color=blue]  at (.5, 1.5){6}; 
\node [color=blue] at (1.5, 1.5){12}; 
\node[color=blue] at (2.5, 1.5){16}; 
\node[color=purple] at (3.5, 1.5){18}; 
\node[color=cyan] at (4.5, 1.5){19}; 

\node[color=blue] at (.5, .5){9}; 
\node[color=blue] at (1.5, .5){15}; 
\node  [color=blue] at (2.5, .5){17}; 
\node[color=blue] at (3.5, .5){19}; 
\node[color=purple] at (4.5,.5){20}; 
\end{scope}

\begin{scope}[xshift=16.5cm]
\foreach \x in {0, 1,2,3,4,5,6,7,8} {\draw[thick] (0,\x) -- (5, \x); }
\foreach \x in {0, 1,2,3, 4, 5} {\draw[thick] (\x,0) -- ( \x,8); }

\node  [color=purple] at (.5, 7.5){1}; 
\node[color=cyan] at (1.5, 7.5){2}; 
\node[color=cyan] at (2.5, 7.5){3}; 
\node[color=cyan] at (3.5, 7.5){4}; 
\node[color=cyan] at (4.5, 7.5){5}; 

\node  [color=blue] at (.5, 6.5){2}; 
\node[color=cyan] at (1.5, 6.5){6}; 
\node[color=cyan] at (2.5, 6.5){7}; 
\node[color=cyan] at (3.5, 6.5){8}; 
\node[color=cyan] at (4.5, 6.5){11}; 

\node [color=blue]at (.5, 5.5){3}; 
\node[color=blue] at (1.5, 5.5){7}; 
\node[color=cyan] at (2.5, 5.5){9}; 
\node[color=cyan] at (3.5, 5.5){10}; 
\node[color=cyan] at (4.5, 5.5){14}; 

\node [color=blue] at (.5, 4.5){4}; 
\node  [color=blue] at (1.5, 4.5){8}; 
\node[color=blue] at (2.5, 4.5){11}; 
\node[color=cyan] at (3.5, 4.5){12}; 
\node[color=purple] at (4.5, 4.5){17}; 

\node [color=blue] at (.5, 3.5){5}; 
\node  [color=blue] at (1.5, 3.5){10}; 
\node[color=purple] at (2.5, 3.5){13}; 
\node[color=cyan] at (3.5, 3.5){15}; 
\node[color=cyan] at (4.5, 3.5){19}; 

\node [color=blue]  at (.5, 2.5){6}; 
\node [color=blue] at (1.5, 2.5){12}; 
\node[color=purple] at (2.5, 2.5){16}; 
\node[color=cyan] at (3.5, 2.5){18}; 
\node[color=cyan] at (4.5, 2.5){20}; 

\node [color=blue]  at (.5, 1.5){9}; 
\node [color=blue] at (1.5, 1.5){15}; 
\node[color=blue] at (2.5, 1.5){19}; 
\node[color=purple] at (3.5, 1.5){21}; 
\node[color=cyan] at (4.5, 1.5){22}; 

\node[color=blue] at (.5, .5){14}; 
\node[color=blue] at (1.5, .5){18}; 
\node  [color=blue] at (2.5, .5){20}; 
\node[color=blue] at (3.5, .5){22}; 
\node[color=purple] at (4.5,.5){23}; 
\end{scope}

\begin{scope}[xshift=22cm]
\foreach \x in {0, 1,2,3,4,5,6,7,8,9} {\draw[thick] (0,\x) -- (5, \x); }
\foreach \x in {0, 1,2,3, 4, 5} {\draw[thick] (\x,0) -- ( \x,9); }
\node  [color=purple] at (.5, 8.5){1}; 
\node[color=cyan] at (1.5, 8.5){2}; 
\node[color=cyan] at (2.5, 8.5){3}; 
\node[color=cyan] at (3.5, 8.5){4}; 
\node[color=cyan] at (4.5, 8.5){5}; 

\node  [color=blue] at (.5, 7.5){2}; 
\node[color=cyan] at (1.5, 7.5){6}; 
\node[color=cyan] at (2.5, 7.5){7}; 
\node[color=cyan] at (3.5, 7.5){8}; 
\node[color=cyan] at (4.5, 7.5){11}; 

\node [color=blue]at (.5, 6.5){3}; 
\node[color=blue] at (1.5, 6.5){7}; 
\node[color=cyan] at (2.5, 6.5){9}; 
\node[color=cyan] at (3.5, 6.5){10}; 
\node[color=cyan] at (4.5, 6.5){12}; 

\node [color=blue] at (.5, 5.5){4}; 
\node  [color=blue] at (1.5, 5.5){8}; 
\node[color=blue] at (2.5, 5.5){11}; 
\node[color=cyan] at (3.5, 5.5){13}; 
\node[color=cyan] at (4.5, 5.5){16}; 

\node [color=blue] at (.5, 4.5){5}; 
\node  [color=blue] at (1.5, 4.5){10}; 
\node[color=cyan] at (2.5, 4.5){14}; 
\node[color=cyan] at (3.5, 4.5){15}; 
\node[color=purple] at (4.5, 4.5){19}; 

\node [color=blue] at (.5, 3.5){6}; 
\node  [color=blue] at (1.5, 3.5){13}; 
\node[color=blue] at (2.5, 3.5){16}; 
\node[color=cyan] at (3.5, 3.5){17}; 
\node[color=cyan] at (4.5, 3.5){21}; 

\node [color=blue]  at (.5, 2.5){9}; 
\node [color=blue] at (1.5, 2.5){15}; 
\node[color=purple] at (2.5, 2.5){18}; 
\node[color=cyan] at (3.5, 2.5){20}; 
\node[color=cyan] at (4.5, 2.5){22}; 

\node [color=blue]  at (.5, 1.5){12}; 
\node [color=blue] at (1.5, 1.5){17}; 
\node[color=blue] at (2.5, 1.5){21}; 
\node[color=purple] at (3.5, 1.5){23}; 
\node[color=cyan] at (4.5, 1.5){24}; 

\node[color=blue] at (.5, .5){14}; 
\node[color=blue] at (1.5, .5){20}; 
\node  [color=blue] at (2.5, .5){22}; 
\node[color=blue] at (3.5, .5){24}; 
\node[color=purple] at (4.5,.5){25}; 
\end{scope}

\begin{scope}[xshift=27.5cm]
\foreach \x in {0, 1,2,3,4,5,6,7,8,9, 10} {\draw[thick] (0,\x) -- (5, \x); }
\foreach \x in {0, 1,2,3, 4, 5} {\draw[thick] (\x,0) -- ( \x,10); }
\node  [color=purple] at (.5, 9.5){1}; 
\node[color=cyan] at (1.5, 9.5){2}; 
\node[color=cyan] at (2.5, 9.5){3}; 
\node[color=cyan] at (3.5, 9.5){4}; 
\node[color=cyan] at (4.5, 9.5){5}; 

\node  [color=blue] at (.5, 8.5){2}; 
\node[color=cyan] at (1.5, 8.5){6}; 
\node[color=cyan] at (2.5, 8.5){7}; 
\node[color=cyan] at (3.5, 8.5){8}; 
\node[color=cyan] at (4.5, 8.5){9}; 

\node [color=blue]at (.5, 7.5){3}; 
\node[color=blue] at (1.5, 7.5){10}; 
\node[color=cyan] at (2.5, 7.5){11}; 
\node[color=cyan] at (3.5, 7.5){12}; 
\node[color=cyan] at (4.5, 7.5){15}; 

\node [color=blue]at (.5, 6.5){4}; 
\node[color=blue] at (1.5, 6.5){11}; 
\node[color=purple] at (2.5, 6.5){13}; 
\node[color=cyan] at (3.5, 6.5){14}; 
\node[color=cyan] at (4.5, 6.5){17}; 

\node [color=blue] at (.5, 5.5){5}; 
\node  [color=blue] at (1.5, 5.5){12}; 
\node[color=blue] at (2.5, 5.5){15}; 
\node[color=cyan] at (3.5, 5.5){16}; 
\node[color=cyan] at (4.5, 5.5){19}; 

\node [color=blue] at (.5, 4.5){6}; 
\node  [color=blue] at (1.5, 4.5){14}; 
\node[color=blue] at (2.5, 4.5){17}; 
\node[color=cyan] at (3.5, 4.5){18}; 
\node[color=purple] at (4.5, 4.5){22}; 

\node [color=blue] at (.5, 3.5){7}; 
\node  [color=blue] at (1.5, 3.5){16}; 
\node[color=blue] at (2.5, 3.5){19}; 
\node[color=cyan] at (3.5, 3.5){20}; 
\node[color=cyan] at (4.5, 3.5){24}; 

\node [color=blue]  at (.5, 2.5){8}; 
\node [color=blue] at (1.5, 2.5){18}; 
\node[color=purple] at (2.5, 2.5){21}; 
\node[color=cyan] at (3.5, 2.5){23}; 
\node[color=cyan] at (4.5, 2.5){25}; 

\node [color=blue]  at (.5, 1.5){9}; 
\node [color=blue] at (1.5, 1.5){20}; 
\node[color=blue] at (2.5, 1.5){24}; 
\node[color=purple] at (3.5, 1.5){26}; 
\node[color=cyan] at (4.5, 1.5){27}; 

\node[color=blue] at (.5, .5){10}; 
\node[color=blue] at (1.5, .5){23}; 
\node  [color=blue] at (2.5, .5){25}; 
\node[color=blue] at (3.5, .5){27}; 
\node[color=purple] at (4.5,.5){28}; 
\end{scope}

\end{tikzpicture}
\caption{ Fillings when $\alpha=5$.}
\label{fig:5col}
\end{figure}

\end{proof}


\section{Existence of components of codimension $-\rho$.}

We now use the results of the previous section to show the existence of components of the Brill-Noether loci of the expected dimension. 
We will use the techniques developed in \cite{LOTZ1}, \cite{LOTZ2}, \cite{Ramif} to show that at a generic point of this component, the Petri map is onto.

Assume that we have  a one-dimensional family of curves in which the generic curve is non-singular and the special curve is a chain of elliptic curves.
A line bundle in the family can be modified with line bundles that have support on the central fiber. This operation  changes the distribution of degree among the elliptic components.
The space of sections will give rise in each case to a space of sections on the elliptic chain.
When the degree is concentrated one at a time on each of the elliptic components, one obtains the data of a  limit linear series. 
One could instead consider all possible  distributions of degrees among the components.
The data of a linked linear series  (see \cite{O1}, \cite{O2}) is given by a line bundle on the chain 
and  spaces of sections of dimension $r+1$ for each such line bundles that are compatible with the maps between the line bundles obtained by changing degrees. 

In \cite{Ramif}, Definition 1.6, we defined a limit section of a limit linear series on a chain of elliptic curves as a collection of sections  $s_i$, one for each elliptic curve 
such that the orders of vanishing at the nodes satisfy $ord_{Q^i}s^i+ord_{P^{i+1}}s^{i+1}\ge d$.
We called the section  refined if the inequalities are equalities.
From a refined section, we can produce a section of a linked linear series for any choice of degrees (see section 2 of  \cite{LSVB}).
We showed in  \cite{Ramif}, Lemma  1.7 that for any limit section of the canonical series on a chain of elliptic and rational curves, 
there exists (at least one) elliptic component in which the order of vanishing of the section at the two nodes adds up to $2g-2$.
Moreover, given a collection of limit sections  such that  the orders of vanishing of the sections adding to $2g-2$ happen at distinct elliptic components, then these limit sections are linearly independent
(see  \cite{Ramif}, Proposition  1.8).
Note that Proposition  1.8 in  \cite{Ramif} is stated for a generic chain of elliptic curves but the proof is equally applicable when some of the nodes differ in torsion.

\begin{theorem}\label{ThExComp}
Fix  values of the genus $g, r, d$ such that  $r+1\le g-d+r$ and  $ -\rho(g,r,d)+r+1\le g,  \rho(g,r,d)\le 0$. 
Then there exists at least one component of ${\mathcal M}^r_{g,d}$ of codimension $-\rho$ in ${\mathcal M}_g$.
 The Petri map at the generic point of this component is onto.
\end{theorem}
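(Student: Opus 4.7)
Write $\alpha = r+1$ and $\beta = g-d+r$, so that $-\rho = \alpha\beta - g$. By Serre duality one may assume $d \le g-1$, hence $\alpha \le \beta$, and the hypothesis $0 < -\rho \le g-2$ translates precisely into $\alpha\beta/2+1 \le g < \alpha\beta$, which is the range in which Lemma \ref{lemfilex} applies. The plan is to first invoke that lemma to produce an admissible filling of the $\alpha\times\beta$ rectangle with exactly $-\rho$ doubled indices. From such a filling I would assemble a chain $X$ of $g$ elliptic curves in which each doubled index $i_k$ is attached to a special component satisfying $l_k(P_{i_k}-Q_{i_k})=0$, where $l_k$ divides the grid distance between the two boxes labeled $i_k$, while the remaining components are chosen generic. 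Proposition \ref{prop:compesp} converts the filling into a component of refined limit linear series on $X$ of the expected codimension, and standard smoothing arguments for limit linear series of expected dimension yield a component of ${\mathcal M}^r_{g,d}$ of codimension $-\rho$ in ${\mathcal M}_g$.

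For the Petri map, the plan is to check surjectivity on the limit $X$ and then invoke semicontinuity. The source of the Petri map has dimension $\alpha\beta$ and the target $H^0(C,K_C)$ has dimension $g$, so surjectivity is equivalent to the kernel having dimension exactly $-\rho$. Following the strategy of \cite{LOTZ1}, \cite{LOTZ2}, and \cite{Ramif}, I would exhibit, for each component $C_i$ of the chain, an element in the image of the Petri map whose associated limit canonical section has orders of vanishing at $P_i, Q_i$ summing to $2g-2$. Concretely, for each $i \in \{1,\dots,g\}$, pick a box labeled $i$ in row $k$ and column $j$; the limit basis section $s_j^i$ of $L$ with vanishing $(u_j^i, v_j^i)$ at $(P_i, Q_i)$ and the complementary limit basis section $t_k^i$ of $K-L$ multiply to a limit section of $K$ with vanishing orders at $(P_i, Q_i)$ summing to $2g-2$ on $C_i$. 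Proposition \ref{prop:indepsecK} then gives linear independence of the resulting $g$ canonical sections in the image, forcing Petri surjectivity on $X$, and semicontinuity extends this to the generic point of the smoothed component.

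The main obstacle I expect is the combinatorial bookkeeping in the last step: for each $i$ one must verify that the designated pair $(s_j^i, t_k^i)$ truly lies in the image of the Petri map and produces the claimed extremal vanishing on $C_i$. This requires tracking the vanishing sequences of both the limit linear series coming from the filling and of its Serre dual, and checking that the torsion hypotheses on the special components are exactly what is needed to make the required product sections available. Once Petri surjectivity is established at the limit, the smoothness of ${\mathcal M}^r_{g,d}$ at any Petri-surjective point independently confirms the expected codimension $-\rho$, completing both assertions of the theorem.
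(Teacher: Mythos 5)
Your proposal follows the paper's proof essentially step for step: Lemma \ref{lemfilex} supplies the admissible filling with $-\rho$ doubled indices, Proposition \ref{prop:compesp} converts it into a family of smoothable limit linear series supported on chains of codimension $-\rho$, and Petri surjectivity is obtained exactly as you outline, by assigning to each index $i$ a single product of a column section with the corresponding Serre-dual (row) section so that the resulting $g$ limit canonical sections concentrate on distinct components and Proposition \ref{prop:indepsecK} gives their independence. The only difference is cosmetic: you cite Lemma \ref{lemfilex} at the outset, where the paper nominally cites Lemma \ref{lemseptor}, but the paper's own choice of which products to retain is taken from the Lemma \ref{lemfilex} construction, so the two arguments coincide.
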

\begin{proof} Write $\alpha=r+1, \beta=g-d+r$. Then $\rho=g-\alpha\beta$.
Moreover, the conditions   $r+1\le g-d+r$ and $ -\rho(g,r,d)+r+1\le g,  \rho(g,r,d)\le 0$ can be written as $\alpha\le \beta$, and $ \frac{\alpha(\beta+1)}2 \le g\le \alpha\beta$.
Then,  Lemma \ref{lemfilex}  guarantees the existence of a filling of the $\alpha\times \beta$-rectangle with the indices 
$1, 2,\dots, \alpha \beta+\rho$. 
As there are $ \alpha \beta$ spots in the rectangle, $-\rho$ of the  indices in the filling are repeated.
From Proposition \ref{prop:compesp}, such a filling corresponds to limit linear series on chains of elliptic curves  such that 
on $-\rho$ of the curves in the chain (corresponding to the repeated indices) the difference of the nodes is a torsion point on the elliptic curve of order determined by the grid distance between the two appearances of the index.
Therefore, the chains of elliptic curves on which such a linear series exists  moves on a space of codimension precisely $-\rho$ in the space of all chains of elliptic curves.
Note that $-\rho$ is the expected codimension of the loci of curves that posses a linear series with the given $d,r$.
From the determinantal nature of the space of linear series,  the sections extend to nearby non-singular curves. 
Therefore, there exists  at least one component of  ${\mathcal M}^r_{g,d}$ of codimension $-\rho$ in ${\mathcal M}_g$.

We now show that the Petri map is onto, or equivalently, that it has maximal rank.

From each column of the filled rectangle we can build a limit linear section $s_i, i=1,\dots, r+1$ such that 
\[ ord_{P_1}s_i= i-1, ord_{Q_g}s_i=r-i+1, \ ord_{Q_t}s_i+ord_{P_{t+1}}s_i=d, \   \ ord_{P_n}s_i+ord_{Q_n}s_i=\begin{cases} d& \text{ if } n\text{  is in column }\ \ \ \ i\\ d-1& \text{ if } n\text{ is  not in column }i\end{cases} \]
Interchanging rows and columns for an admissible filled rectangle, we obtain another admissible filled rectangle.
We will call this rectangle the Serre dual rectangle.
The linear series of degree $d$ and dimension $r$ then has a Serre dual linear series  of degree $\bar d=2g-2-d$ and dimension $\bar r=g-d+r-1$.
As grid distances are preserved when interchanging rows and columns, the Serre dual series works on the same special chain as the original one.
 We denote by $\bar s_1,\dots, \bar s_{\bar r+1}$  the sections corresponding to the columns of the Serre dual linear series.
For every index $i=1,\dots, g$, we choose  one spot (of the potentially two spots) where it appears in the rectangle.
If the chosen spot for the index $i$ is  $(a,b)$, we define the section of the canonical $t_i=s_a\bar  s_b$.
As we are assuming that the  index $i$ is in spot $(a,b)$ of the rectangle, the sections $s_a, \bar s_b$ have vanishing at the nodes on $C_i$ adding to the maximum:
\[    \ ord_{P_i}s_a+ord_{Q_i}s_a=d, \ ord_{P_i}\bar s_b+ord_{Q_i}\bar s_b=\bar d =2g-2-d.\]
Therefore, as  $t_i= s_a\bar s_b$,
\[ \ ord_{P_i}(t_i)+ord_{Q_i}(t_i)= \ ord_{P_i}s_a+ord_{P_i}\bar s_b+ord_{Q_i}s_a+ord_{Q_i}\bar s_b=2g-2\]
From our  choice of $s_a, \bar s_b$ and therefore of $t_i$, every index $i=1,\dots, g$ is being assigned a unique section $t_i$. 
From Proposition1.8 in  \cite{Ramif},  the $t_i$ are linearly independent.
As there are  $g$ different sections $t_i$,one for every index $1\le i\le g$, there are $g$ independent sections. 
Because $g$ is the the dimension of $H^0(C,K)$,  the Petri map has maximal rank.
\end{proof}

\begin{remark} Recall that the orthogonal to the image of the Petri map is identified to the tangent space to $W^r_d$.
The Petri map being onto confirms that $W^r_d$ is $0$-dimensional on curves near the elliptic chain.

Moreover, we can explicitly identify the elements in the Kernel of the Petri map at our special curves.
If we use the notations of   Theorem \ref{ThExComp}, for each  index $i$ appearing twice in the filling of the rectangle on spots $(a_1,b_1), (a_2,b_2)$, 
there exists a constant $c_{a,b}$ such that  $S_i=s_{a_1} \otimes \bar s_{b_1}-c_{a,b}s_{a_2} \otimes \bar s_{b_2}$ is in the kernel of the Petri map.
Moreover, these $S_i$ are linearly independent, as the $s_{a_1}\otimes \bar s_{b_1}$ are linearly independent.
As there is the right number of $S_i$, they span the kernel of the Petri map.

 One can find the tangent space to the locus where the Petri map fails to be injective inside the tangent space to   ${\mathcal M}_g$ 
as the orthogonal to the ramification divisors of the pencil spanned by $s_{a_1}, s_{a_2}$ (see \cite{AC}, \cite{semican} )
each pair $(a_1,b_1)$ imposes a different condition as the ramification divisors live on different elliptic components.
This may perhaps be helpful when trying to distinguish between different Brill-Noether loci.
\end{remark}


\section{Components of the same codimension are different}

We want to look  at possible equalities of loci ${\mathcal M}^r_{g,d}$  when these loci  are of  the same expected dimension.
It was proved in \cite{CKK}, \cite{CK}, that when the codimension is 1 or 2, then the loci are distinct (see also\cite{AHL}).
We generalize this result here to higher codimension. 
Note that by Serre duality, ${\mathcal M}^r_{g,d}={\mathcal M}^{r+g-d-1}_{g,2g-2-d}$, therefore, assuming that $r+1\le g-d+r$ is not  restrictive.

\begin{theorem}\label{distBN}
Fix  values of the genus $g, r_i, d_i$ such that  $\rho(g,r_i, d_i)< 0$.
Write $e_i=-\rho(g,r_i, d_i)$.
If $d_1,r_1, d_2, r_2$ are such that $e_1=e_2$ and  for each $i$, either $r_i+1\le g-d_i+r_i-2$ and $e_i\le \frac{r_i^2+3r_i-2}2$ or $r_i+1\le g-d_i+r_i$ and $e_i\le \frac{(r_i^2+r_i)}2$
then either $(r_1, d_1)=(r_2, d_2)$  or ${\mathcal M}^{r_1}_{g,d_1}\not= {\mathcal M}^{r_2}_{g,d_2}$.

\end{theorem}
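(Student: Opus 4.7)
The plan is to assume $\mathcal{M}^{r_1}_{g,d_1} = \mathcal{M}^{r_2}_{g,d_2}$ with $(r_1,d_1)\neq (r_2,d_2)$ and derive a contradiction by producing a chain of elliptic curves with prescribed torsion data that lies in the closure of the first locus but is incompatible with any admissible filling for the second. Writing $\alpha_i=r_i+1$ and $\beta_i=g-d_i+r_i$, the equality $\rho(g,r_i,d_i)=-e$ gives $\alpha_i\beta_i=g+e$, so the two rectangles have the same area. The assumption $r_i+1\le g-d_i+r_i$ means $\alpha_i\le\beta_i$, so if the two pairs differ, we may reorder them to assume $\alpha_1<\alpha_2\le\beta_2<\beta_1$, and consequently $\alpha_1+\beta_1>\alpha_2+\beta_2$. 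The two alternatives in the hypothesis on $e_i$ are exactly what is needed so that Lemma \ref{lemseptor} can be applied to each rectangle.

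Applying Lemma \ref{lemseptor} to the rectangle of shape $\alpha_1\times\beta_1$ yields an admissible filling with $g$ distinct indices, $e$ of which, say $i_1,\ldots,i_e$, appear exactly twice and attain the maximum total grid distance
\[
\Sigma_1 := \sum_{t=1}^e \delta_t^{(1)} = e(\alpha_1+\beta_1-2) - 2\!\left(\tfrac{k^3-k}{3}+jk\right),
\]
where $k,j$ depend only on $e$. Setting $l_t:=\delta_t^{(1)}$ and choosing each $C_{i_t}$ so that $P_{i_t}-Q_{i_t}$ has exact order $l_t$ (with every other component of the chain generic), Proposition \ref{prop:compesp} produces a refined limit $g^{r_1}_{d_1}$ on the resulting chain $X$, so $[X]$ sits in the closure of $\mathcal{M}^{r_1}_{g,d_1}$. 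Under the assumed equality of loci, $X$ must also support a limit $g^{r_2}_{d_2}$, which via Propositions \ref{teor:sllimplicasllesp} and \ref{prop:compesp} corresponds to an admissible filling of the $\alpha_2\times\beta_2$ rectangle. The genericity of the components $C_i$ for $i\notin\{i_1,\ldots,i_e\}$ (through Lemma \ref{lemsecLelcurv}) forces every repeated index of this second filling to belong to $\{i_1,\ldots,i_e\}$; counting the $e$ excess cells in the ``all doubles'' case forces each $i_t$ to appear exactly twice, at Manhattan distance $\delta_t^{(2)}$ satisfying $l_t\mid\delta_t^{(2)}$ and hence $\delta_t^{(2)}\ge l_t$. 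Summing and applying Lemma \ref{lemseptor} to the second rectangle,
\[
\Sigma_1 = \sum_t l_t \le \sum_t \delta_t^{(2)} \le e(\alpha_2+\beta_2-2) - 2\!\left(\tfrac{k^3-k}{3}+jk\right) =: \Sigma_2.
\]
But $\Sigma_1-\Sigma_2 = e\bigl((\alpha_1+\beta_1)-(\alpha_2+\beta_2)\bigr)>0$, a contradiction. Hence $(\alpha_1,\beta_1)=(\alpha_2,\beta_2)$ and therefore $(r_1,d_1)=(r_2,d_2)$.

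The chief technical obstacle is handling the case where some $i_t$ occurs three or more times in the admissible filling of the $\alpha_2\times\beta_2$ rectangle, which a priori reduces the count of doubled indices below $e$. Every pair of occurrences of such an $i_t$ still contributes a Manhattan distance divisible by $l_t$, and one can select, for each of the $e$ excess cells, a pair of occurrences so that the resulting $e$ distances still total at least $\sum_t l_t=\Sigma_1$, while the upper bound $\Sigma_2$ from Lemma \ref{lemseptor} is only tightened by the presence of higher multiplicities. A minor secondary issue is passing from equality of the loci inside $\mathcal{M}_g$ to equality of their closures in $\overline{\mathcal{M}_g}$ and hence to the existence of a compatible limit linear series at $[X]$; this is standard given that $[X]$ lies in a component of codimension exactly $e$, as furnished by Theorem \ref{ThExComp}.
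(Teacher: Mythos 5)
Your proposal is correct and follows essentially the same route as the paper: apply Lemma \ref{lemseptor} to both rectangles, build a chain whose torsion orders realize the maximal total grid distance for the larger-perimeter rectangle, and observe that the divisibility constraints then make the total distance required of any filling of the other rectangle exceed its maximum. You are in fact somewhat more explicit than the paper about the reduction to $\alpha_1+\beta_1>\alpha_2+\beta_2$, the degeneration step, and the possibility of indices occurring with multiplicity greater than two, which the paper's proof passes over silently.
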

\begin{proof}
The conditions   $\rho(g,r_1,d_1)=\rho(g,r_2,d_2)$, is equivalent to  $( r_1+1) (g-d_1+r_1)=( r_2+1) (g-d_2+r_2)$.
If also $( r_1+1)+ (g-d_1+r_1)=( r_2+1)+ (g-d_2+r_2)$,  then, as the sum and product of two numbers determine the two numbers up to the order, 
 either $r_1+1=r_2+1, g-d_1+r_1=g-d_2+r_2$ or $r_1+1=g-d_2+r_2, g-d_1+r_1=r_2+1$.
The first option means that  $d_1=d_2, r_1=r_2$. The second option translates into  $d_1=2g-2-d_2, r_1+1=g-d_2+r_2$.
So, the two Brill-Noether loci correspond to the same values of degree and dimension or to the  Serre dual values.

Assume then that $( r_1+1)+ (g-d_1+r_1)\not= ( r_2+1)+ (g-d_2+r_2)$.
To fix ideas, we can assume that  $( r_1+1)+ (g-d_1+r_1)> ( r_2+1)+ (g-d_2+r_2)$.
 With the notation of Lemma \ref{lemseptor}, as $e_1=e_2=e$, also $k_1=k_2=k, j_1=j_2=j$.
 From Lemma \ref{lemexfilcorn} and Lemma \ref{lemseptor},  there is an admissible fillings of the  $( r_1+1)\times (g-d_1+r_1)$-rectangle with the numbers $1,2,\dots, ( r_1+1)(g-d_1+r_1)-e$ for which 
 the sums of grid distances $|b^{i_t}_{1,2}-b^{i_t}_{1,1}1|+|a^{i_t}_{1,2}-a^{i_t}_{1,1}|$ between the spots where the indices $i_t, t=1,\dots, e$ appear   satisfies 
 \[ \sum_{t=1}^{e}(|b^{i_t}_{1,2}-b^{i_t}_{1,1}|+|a^{i_t}_{1,2}-a^{i_t}_{1,1}|)=  e(g+2r_1-d_1-2)-2[2\times 1+3\times 2+\dots+k(k-1)+jk]=A_1.       \]
In a filling of the  $( r_2+1)\times (g-d_2+r_2)$-rectangle with the numbers $1,2,\dots, ( r_2+1)(g-d_2+r_2)-e $,  the sums of grid distances of repeated digits satisfy 
 \begin{equation}\label{eqin} \sum_{t=1}^{e_2}(|b^{i_t}_{2,2}-b^{i_t}_{2,1}|+|a^{i_t}_{2,2}-a^{i_t}_{2,1}|)\le e(g+2r_2-d_2-2)-2[2\times 1+3\times 2+\dots+k(k-1)+jk]<   A_1       .       \end{equation}
Consider a chain of elliptic curves for which there exists a limit linear series of degree $d_1$ and rank $r_1$ with maximal sum of grid distances $A_1$ among repeated indices.
From inequality \ref{eqin}, such a curve does not support a limit linear series of rank $r_2$ and degree $d_2$.
Therefore,  ${\mathcal M}^{r_1}_{g,d_1}\nsubseteq {\mathcal M}^{r_2}_{g,d_2}$.\end{proof}

The methods we use in the previous proof can also be used in the case of different expected codimensions to show that  certain inclusions of Brill-Noether loci are impossible.
Some results in this direction were proved in \cite{CK} Corollary 3.6 and in very recent work (see  \cite{AHL}).
While this tool is unlikely to prove Auel-Haburcak conjecture, it provides some evidence that inclusion of potentially maximal Brill-Noether loci are rare.
We  give only one example below, many more could be investigated in a similar way.

\begin{example}\label{exinclloci}
Assume that   ${\mathcal M}^{r_1}_{g,d_1}\subseteq {\mathcal M}^{r_1+1}_{g,d_2}$ with the two loci being of codimension 2 and 1 respectively in ${\mathcal M}_g$.
 
 The conditions on codimension mean that $ g-(r_1+1)(g-d_1+r_1)=-2, \ g-(r_2+1)(g-d_2+r_2)=-1$.
 We can construct a smoothable $g^{r_1}_{d_1}$ on a chain of elliptic curves in which two of the curves have the differences of their nodes to be torsion of order $r_1+1+g-d_1+r_1-3$.
 If such a chain is in the boundary of ${\mathcal M}^{r_2}_{g,d_2}$, we need $r_1+1+g-d_1+r_1-3\le r_2+1+g-d_2+r_2-2$.
 Writing $\alpha_i=r_i+1, \beta_i=g-d_i+r_i$, we have the equations
 \[ \alpha_1\beta_1=\alpha_2\beta_2+1, \ \ \alpha_2=\alpha_1+1 ,\ \  \alpha_1+\beta_1\le \alpha_2+\beta_2+1 \]
 From the first two equations
  \[ \alpha_1\beta_1-(\alpha_1+1)\beta_2=1 .\]
 The general solution $\beta_1, \beta_2$ of  the above diophantine equation is
  \[ \beta_1= 2\alpha_1+1+t(\alpha_1+1), \beta _2=2\alpha_1-1+t\alpha_1\]
   for  integral values of $t$. As $\beta_i>0$,  we need $t\ge -1$.
   The inequality above can then be written as 
   \[ \alpha_1+ 2\alpha_1+1+t(\alpha_1+1)\le \alpha _1+1+2\alpha_1-1+t\alpha_1+1\Leftrightarrow  t\le 0 \]
  So, the only possible solutions are $t=0, t=-1$.
  
   If $t=0$, we have a potential inclusion 
   \[ {\mathcal M}^{\alpha -1}_{2\alpha^2+\alpha-2,2\alpha^2-4}\subseteq {\mathcal M}^{\alpha}_{2\alpha^2+\alpha-2,2\alpha^2-1} \]
   For $\alpha=2$, this give the known inclusion $ {\mathcal M}^{1}_{8,4}\subseteq {\mathcal M}^{2}_{8,7} $ while for $\alpha=3, 4\dots $, it would lead to potential inclusions 
   $ {\mathcal M}^{2}_{19,14}\subseteq {\mathcal M}^{3}_{19,17} ,\  {\mathcal M}^{3}_{34,28}\subseteq {\mathcal M}^{4}_{34,31} ,\dots$.
   The potential inclusion for the case of $g=19$ was proved not to materialize in \cite{AH} Proposition 6.17.
 
   If $t=-1$, we have a potential inclusion 
   \[ {\mathcal M}^{\alpha -1}_{\alpha^2-2,\alpha^2-3}\subseteq {\mathcal M}^{\alpha}_{\alpha^2-2,\alpha^2-1} ={\mathcal M}^{\alpha-2}_{\alpha^2-2,\alpha^2-5} \]
 where the last equality is obtained from Serre duality.
   We need the degree $\alpha^2-3$ to be larger than 1, so we take $\alpha$ to be at least three.
   For $\alpha=3$, we obtain the known inclusion  $ {\mathcal M}^{2}_{7,6}\subseteq {\mathcal M}^{3}_{7,8}={\mathcal M}^{1}_{7,4}$. 
  For $\alpha= 4, 5\dots $, it would lead to potential inclusions 
   $ {\mathcal M}^{3}_{14,13}\subseteq {\mathcal M}^{2}_{14,9} ,\  {\mathcal M}^{4}_{23,22}\subseteq {\mathcal M}^{3}_{23,20} ,\dots$.
   Work of Lelli-Chiesa \cite{L}, Auel and Haburcak \cite{AH}, and Auel, Haburcak and Larson \cite{AH} show that some of the potential inclusions do not exist, many others remain open. 
    Still, this method show that potential inclusions of Brill-Noether loci are few and far between.
 \end{example}

\section{Conflict of interests and data}
 This manuscript does not make use of any data.
 
 The author has no conflicts of interests.



\begin{thebibliography}{ACGH85}

\bibitem[AC]{AC} E. Arbarello,, M. Cornalba, 
{\it Su una congettura di Petri}  Comment. Math. Helv.56(1981), no.1, 1-38.

\bibitem [AH]{AH} A.Auel, R. Haburcak, {\it Maximal Brill-Noether loci via K3 surfaces} arXiv:2310099

\bibitem [AHK]{AHK} A.Auel, R. Haburcak, A.L.Knudsen{\it Distinguishing Brill-Noether loci}, arXiv:2406.19993  

\bibitem [AHL]{AHL} A.Auel, R. Haburcak, H.Larson, {\it Maximal Brill-Noether loci via the gonality stratification} arXiv:2206.04610

\bibitem [BS]{BS} A. Bruno, E. Sernesi, {\it A note on the Petri loci}, 
Manuscripta Math. 136 (2011), no. 3-4, 439-443.

\bibitem [CK]{CK}
Y.Choi, S. Kim, {\it Linear series on a curve of compact type bridged by a chain of elliptic curves}
Indag. Math. (N.S.) {\bf 33} (2022), no. 4, 844-860.

\bibitem [CKK]{CKK}
Y.Choi, S. Kim,Y.R. Kim, {\it Brill-Noether divisors for even genus},  J. Pure Appl. Algebra {\bf 218} (2014), no. 8, 1458-1462

\bibitem [CLT]{CLT}
A.~Castorena, A.~Lopez-Martin and M.~Teixidor i Bigas.
\newblock {\it Petri map for vector bundles near good bundles,.} 
\newblock {Jounal Pure Applied Algebra} {\bf 222,} 2018, 1692-1703.

\bibitem[CLPT]{CLPT}
M. Chan,  A. López Martín, N.Pflueger,M. Teixidor {\it Genera of Brill-Noether curves and staircase paths in Young tableaux}. Trans. Amer. Math. Soc. {\bf 370} (2018), no. 5, 3405–3439.

\bibitem[CT]{CT}
A.~Castorena and M.~Teixidor.
\newblock {\it Divisorial components of the Petri locus for pencils.} 
\newblock {Jounal Pure Applied Algebra} {\bf 212}, 2008, 1500-1508.

\bibitem[CK]{CK} Y. Choi, Youngook, S. Kim,
{\it Linear series on a curve of compact type bridged by a chain of elliptic curves},
Indag. Math. (N.S.) 33 (2022), no. 4, 844-860.

\bibitem[CDPR]{CDPR}
F.~Cools, J.~Draisma, S.~Payne, E.~Robeva.
\newblock {\em A tropical proof of the Brill-Noether theorem.} 
\newblock{ Adv. Math. 230, 759--776, 2012.}

\bibitem[E]{E}
D. Edidin, {\it The monodromy of certain families of linear series is at least the alternating group.},
 Proc. Amer. Math. Soc.  \textbf{113} (1991), no.~4, 911--922.

\bibitem[EH2]{EH2}
D. Eisenbud and J. Harris, {\it Limit linear series: Basic theory},
  Inventiones Mathematicae \textbf{85} (1986), no.~2, 337--371.
  
 \bibitem[JR]{JR} D.Jensen, D. Ranganathan,
  {\it Brill-Noether theory for curves of a fixed gonality}
Forum Math. Pi 9 (2021), Paper No.1,33 p.

\bibitem[L1]{L} M.Lelli-Chiesa, {\it Stability of rank-3 Lazarsfeld-Mukai bundles on  K3  surfaces},
Proc. Lond. Math. Soc. (3) 107 (2013), no. 2, 45-479.

\bibitem[L2]{MarGiesP} M.Lelli-Chiesa, {\it A codimension 2 component of the Gieseker-Petri locus},
J. Algebraic Geom. 31 (2022), no. 4, 751-771.


\bibitem[LOTZ1]{LOTZ1} F.Liu, B.Osserman, Brian, M. Teixidor i Bigas, N. Zhang
{\it Limit linear series and ranks of multiplication maps}, Trans. Amer. Math. Soc.374(2021), no.1, 367-405

\bibitem[LOTZ2]{LOTZ2} F.Liu, B.Osserman, Brian, M. Teixidor i Bigas, N. Zhang
{\it The strong maximal rank conjecture and moduli spaces of curves}, to appear in Algebra and Number Theory.

 \bibitem [LT]{LT}
 A.~Lopez-Martin and M.~Teixidor.
\newblock {\it Limit linear series on chains of elliptic curves and tropical divisors on chains of loops.} 
\newblock Doc. Math {\bf 22,} 2017, 263-286.
  
\bibitem[O1]{O1}
Brian Osserman {\it A limit linear series moduli scheme}, Annales de l'Institut  Fourier \textbf{56} (2006), no.~4, 1165--1205.

\bibitem[O2]{O2}
Brian Osserman {\it Limit linear series moduli stacks in higher rank}, preprint, 2014.

\bibitem[P1]{PBN}
N.Pflueger. {\it  On linear series with negative Brill-Noether number}. arxiv1311.5845.


\bibitem[P2]{Pdisp}
N.Pflueger. {\it  Displacement divisors on marked chains of cycles}. arxiv1603.07364.

\bibitem[P3]{Prho<0}
N.Pflueger. {\it  Linear series with $\rho<0$  via thrifty Lego building}. J. Reine Angew. Math. 797 (2023), 193-228.

\bibitem[T1]{semican} M. Teixidor i Bigas,
{\it Half-canonical series on algebraic curves}, Trans. Amer. Math. Soc.302(1987), n.1, 9-115.

\bibitem[T2]{LSVB}
M. Teixidor i Bigas,
{\it Limit Linear series for vector bundles},  Tohoku Math. J. 66 (2014), 552-562.

\bibitem[T3]{Ramif}
M. Teixidor i Bigas,
{\it Brill-Noether loci with ramification at two points}, Ann. Mat. Pura Appl. (4) 202 (2023), no. 3, 1217-1232

\bibitem[T4]{MMVB}
M. Teixidor i Bigas.
{\it Ranks of maps of vector bundles}. RevMath Complutense  (2023).

\bibitem[W]{W}
G.~E.~Welters.
\newblock {\it  A theorem of Gieseker-Petri type for Prym varieties.}
\newblock {Ann. Sci. \'Ecole Norm. Sup.} (4) 18: no. 4, 671--683, 1985.




\end{thebibliography}
\end{document}